\newtheorem{thm}{Theorem}[section]
\newtheorem{prop}{Proposition}[section]
\newtheorem{lem}{Lemma}[section]
\newcommand{\R}{\mathbb{R}}
\newcommand{\N}{\mathbb{N}}
\newcommand{\Z}{\mathbb{Z}}
\newcommand{\be}{\begin{equation}}
\newcommand{\ee}{\end{equation}}
\newcommand{\T}{\mathbb{T}}
\newcommand{\Lap}{\mathop{\kern0pt \ms{L}}\mathopen{}}
\newcommand{\Hess}{\mathop{\kern0pt \mathrm{D}^{2}}\mathopen{}}
\newcommand{\LP}{\mathop{\kern0pt \mathrm{P}}\hspace{-0.25em}\mathopen{}}
\newcommand{\QP}{\mathop{\kern0pt \mathrm{Q}}\mathopen{}} 
\newcommand{\UP}{\mathop{\kern0pt \mathrm{U}}\hspace{-0.15em}\mathopen{}}
\newcommand{\dec}{\mathop{\kern0pt \mathrm{dec}}\mathopen{}} 
\newcounter{exercice}
\newcounter{mycounter}
\newcommand\RSloop{\@ifnextchar\bgroup\RSloopa\RSloopb}
\newcommand\RSloopa[1]{\bgroup\RSloop#1\relax\egroup\RSloop}
\newcommand\RSloopb[1]%
\newcommand\X{0}
\newcommand\RS[1]%
\newcommand\RSdef[1]{\expandafter\def\csname RS:#1\endcsname}
\newlength\RSu
\newcommand\incircbin
\newcommand\@incircbin[2]
\begin{document}

\title[
Invariant measure NLS]{New invariant surface measures for the cubic Non Linear Schr\"odinger equation}
\author[J.-B. Casteras]{Jean-Baptiste Casteras}
\address{CEMSUL, Faculdade de Ci\^encias da Universidade de Lisboa, Edificio C6, Piso 1, Campo Grande 1749-016 Lisboa, Portugal}
\email{jeanbaptiste.casteras@gmail.com}
\author[A. B. Cruzeiro]{Ana Bela Cruzeiro}
\address{GFM, Dep. de Matem\' atica, Instituto Superior T\' ecnico, Lisboa, Portugal}
\email{ ana.cruzeiro@tecnico.ulisboa.pt}
\author[A. Millet]{Annie Millet}
\address{SAMM, EA 4543, Universit\' e Paris 1 Panth\' eon Sorbonne, 90 Rue de Tolbiac, 75634 Paris
Cedex France and Laboratoire de Probabilit\' es, Statistique et Mod\' elisation, UMR 8001,
Sorbonne Universit\' e - Universit\'e Paris Cit\'e.}
\email{ annie.millet@univ-paris1.fr}
\thanks{J.-B.C. is supported by FCT - Funda\c c\~ao para a Ci\^encia e a Tecnologia, under the project: UID/04561/2025. A. B. C.  is supported by FCT - Funda\c c\~ao para a Ci\^encia e a Tecnologia, under the project: UIDB/00208/2020. The work of A.M. is conducted within the FP2M federation (CNRS FR 2036).}

\begin{abstract}
We construct new invariant measures supported on mass level sets for the cubic defocusing nonlinear Schr\" odinger equation
 on the torus in dimensions $1$ and $2$. 
\end{abstract}

\maketitle

\section{Introduction}
 Consider the following Non Linear Schr\"odinger (NLS) equation for $\lambda \in \{ -1, +1\}$ and $p>2$
\begin{equation}
\label{eqintro}
i\frac{\partial v}{\partial t}= -\Delta v +  \lambda |v|^{p-2} v,
\end{equation}
 on the torus $\T^d$ of dimension $d$ (where $\T={\R}/{\Z}$), with initial conditions $v_0\in H^s$, for some $s\in \R$. When $\lambda =1$ (resp. $\lambda =-1$), the equation is defocusing
 (resp. focusing). Recall that \eqref{eqintro} can be seen as a Hamiltonian PDE, i.e.,
$$\dfrac{d u}{dt}= i \dfrac{\partial H}{\partial \overline{u}},$$
where 
$$H(u)= \dfrac{1}{2} \int_{\T^d} |\nabla u|^2 dx+  \dfrac{\lambda}{p} \int_{\T^d} |u|^{p} dx.$$
From this reformulation, it is easily seen that $H(u)$ is an invariant quantity. One can also check that the mass 
$$M(u)= \int_{\T^d} |u|^2 dx,$$
and the momentum
$$P(u)=i\int_{\T^d} u \overline{u}_x dx$$
 are conserved. In general, these three conservation laws are the only known ones.  But in some very special cases, others are available. 
For instance, if $d=1$, the cubic Schr\" odinger equation possesses a Lax pair structure which allows to construct an infinity of conservation laws see \cite{MR905674}. 
In this case, the equation is said to be completely integrable. 

In the case equation \eqref{eqintro} is satisfied on the entire space ${\mathbb R}^d$ instead of the torus, the NLS equation  has been intensively
studied (local and global well-posedness, scattering, blow-up time and profile, ...). Most results suppose that the initial condition belongs to $H^1({\mathbb R}^d)$ and the proofs use 
 Strichartz's and Gagliardo-Nirenberg's inequalities. 
\bigskip

The equation \eqref{eqintro}   on the torus $\T^d$ has been intensively studied as well.  Versions of the Strichartz inequality (with unavoidable loss of regularity) have been obtained
(see e.g. \cite{MR3512894} and the references therein).
Using some new version of the Strichartz inequality, Herr and Kwak  \cite{herr2024strichartz} 
have proved that the cubic NLS equation
on the torus ${\mathbb T}^2$ is globally well-posed in  $H^s$, $s>0$, for any initial condition in the defocusing case (and under some "small data" assumptions in the focusing case); see also \cite{MR3512894}, \cite{kwak2024critical}.

Starting from the work of Lebowitz, Rose and Speer \cite{LRS}, and later on from the papers of Bourgain \cite{B93, B94,B96}, the statistical approach to solve the non linear  Schr\"odinger equation
 on the $d$-dimensional torus has been considerably developed. 
 It consists in constructing an invariant probability (Gibbs) measure for the solutions of these equations, based on their invariant quantities.
 Starting almost everywhere from the support of the measure,  solutions are proved to exist and be unique.  They  can be rather singular, typically living in Sobolev spaces of low regularity. 
\smallskip

 First, let us describe in a few words the situation in the $d$-dimensional case. 
 Let $H(p,q)=H(p_1 ,\ldots ,p_d , q_1 ,\ldots ,q_d)$ be a Hamiltonian and let us consider its associated flow in $\R^{2d}$, i.e., 
\begin{equation}
\label{Hamintro}
\dot{p}_i = \dfrac{\partial H}{\partial q_j},\ \dot{q}_i=-\dfrac{\partial H}{\partial p_j}, \quad  i,j=1, ...,d.
\end{equation}
Then, using the conservation of $H$, Liouville's theorem states that the Gibbs measure $e^{-H(p,q)} \Pi_{j=1}^d dp_j dq_j$ is invariant under the flow generated by 
\eqref{Hamintro}. Notice that, for any 
function $F(p,q)$ which is conserved by the flow \eqref{Hamintro}, the measure $F(p,q) e^{-H(p,q)} \Pi_{j=1}^d dp_j dq_j $ is also invariant. 
The situation is much more delicate in the infinite dimensional case. 
 In analogy with the Euclidean case $\R^d$, by a scaling argument, one expects \eqref{eqintro} to be (at least locally) well-posed for an initial data in $H^{s}$, for $s>s_c= \dfrac{d}{2}-1$. 
The space $H^{s_c}$ is called critical.
\smallskip

In dimension 1, in \cite{B93,B94}, J. Bourgain proved the existence of an invariant measure $\mu$ for equation \eqref{eqintro} for $p\in [4,6]$ in appropriate $H^s$ spaces, and proved the global
well-posedness of the equation for an initial condition in a set of full $\mu$ measure.
 The proof uses a truncated Fourier series, some version of the Strichartz inequality, the "pigeon hole principle" and a large deviations argument. 
\bigskip

 When $d=2$, J.~Bourgain \cite{B93} obtained the local well-posedness of \eqref{eqintro} for initial data in $H^s$, $s>0$. 
The global well-posedness in this Sobolev space was obtained in \cite{herr2024strichartz}. 
In dimensions $3$ and $4$, for  energy-critical non-linearity,  local well posedness was proved  in $H^1$, while
global well-posedness was shown for "small" initial data; see \cite{MR3512894} and the references therein. 
We also refer to \cite{Sta} for related results based on invariant Gibbs measures.
For  the cubic non-linear Schr\"odinger equation (NLS), in the defocusing case and on the two-dimensional torus, a renormalisation must be introduced. 
This equation was studied in \cite{B96}, with the construction of a Gibbs measure on $\bigcap_{s<0} H^s(\T^2)$ associated with the Wick ordered Hamiltonian. 
In \cite{DNY}, the authors study a nonlinearity of the form $|v|^{p-2} v$ for an even integer $p\geq 4$. They also prove
  the existence and uniqueness of the corresponding flow defined on the
 support of the measure by developping the so-called theory of random tensors.
\bigskip

Our goal in this paper is to construct new invariant measures $\mu$ for the defocusing cubic NLS equation in dimension 2 under some mass constraint with support in
$\bigcap_{s<0} H^s(\T^2)$. 
Our technique gives a similar result in dimension 1 which is simpler and does not require renormalization. 
This approach uses techniques from probability theory, where transport properties of Gaussian measures under linear and nonlinear transformations have an old tradition.
 In the framework of Hamiltonian partial differential equations, of which Schr\"odinger systems are an example,
  invariant (or quasi-invariant) measures are typically associated to conservation laws and are weighted Gaussian measures.

Concerning invariant measures with prescribed values of invariant quantities, we mention two works, in which not only the measures but also the methods are quite different from ours.
We refer to the work \cite{OQ} of  T.~Oh and J. ~Quastel, who have proved 
existence of  invariant measures on $H^{\frac{1}{2}-\gamma}$, for some positive $\gamma$, conditioned on fixed values $a>0$ for the  mass and $b\in \R$ 
for the momentum for focusing/defocusing NLS when $p\in [4,6]$  
(with some smallness condition on the mass for a focusing 
quintic nonlinearity, that is $p=6$).

These measures have the form

$$d\mu_{a,b} =\frac{1}{Z} \mathbbm{1}_{\{\int |u|^2 =a\}}\mathbbm{1}_{\{i\int u\partial_x \bar u =b \}}e^{\pm \frac{1}{p} \int |u|^p -\frac{1}{2} \int |u|^2 }dP$$
for $P$ a complex-valued Wiener measure on the circle.

J.~Brereton \cite{MR3900225}   proved the existence of an invariant measure on $\bigcap_{s>1/2} H^s(\T)$ conditioned on mass for the cubic derivative NLS equation, that is when $|v|^{2} v$
is replaced by $i \partial_x(|v|^2 v)$. The construction is made via an infinite-dimensional version of the divergence theorem.

The Gibbs measures are absolutely continuous with respect to Gaussian ones. We consider the corresponding abstract Wiener spaces in the sense of Gross \cite{G}. 
Then we use techniques from quasi-sure analysis on these spaces, as established by Malliavin \cite{M}. For a non degenerate functional on an abstract Wiener space we can 
consider its level sets, which are (finite codimensional) submanifolds of that space. In \cite{AM} Airault and Malliavin defined a disintegration of the Gaussian measure
 into a family of conditional laws or surface measures and proved  a geometric Federer-type coarea formula providing  this disintegration.
Such measures do not charge sets of capacity zero.

 In this work we study the defocusing cubic renormalised NLS on the two dimensional torus and construct invariant surface measures for the mass, 
which is a conserved quantity of the equation in the same spirit as in \cite{B96,OQ}. Then we prove existence and uniqueness of the associated flows, living on  level sets of the mass. Therefore, we can start with initial conditions distributed on sets which are thinner than the support of the Gibbs measures. Let us observe that \cite{B96,OQ} were proved in dimension $1$ and therefore were not requiring normalisation (which makes computations a lot harder). It seems our result is the first one holding in higher dimension. Let us also point out that \cite{B96,OQ} are not interested in the global existence of solution to the Schr\" odinger equation but only in the measure.  


Our methods are similar to those in \cite{C}, where surface measures supported by level sets of the enstrophy were constructed for the Euler equation on the torus.

More precisely, we study the cubic defocusing Schr\" odinger equation in the one and two dimensional torus, namely

\begin{equation}	\label{cubic-defoc}
i\frac{\partial v}{\partial t}= -\Delta v +|v|^2 v
\end{equation}
with initial conditions $v_0$ of low regularity. If the solution is assumed to be at least $L^2$, we can use the following change of variables $v=e^{it (1-2 \|u\|^2_{L^2} )} u$. In this case, 
\eqref{cubic-defoc} is equivalent to 
\begin{equation}
\label{rNLS}
i\frac{\partial u}{\partial t}= -\Delta u+ \Big( |u|^2 -2  \|u \|_{L^2}^2 \Big)u +u .
\end{equation}
Notice that it is not anymore the case if the solution does not have a finite mass.
 Our aim is to take initial data in the support of a surface measure supported on mass level sets. Formally, let
$$  d\mu_{2s}(u) = Z_s^{-1} e^{-\frac{1}{2}\|u\|_{H^s}^2} du,$$
where $Z_s$ is a normalisation constant and $du$ is formally the Lebesgue measure,  which is well-known to not exist on infinite dimensional vector spaces. 
 The Gaussian measures $\mu_{2s}$ can also be seen as the laws of the random variables 
$$\omega \rightarrow \sum_{n\in \Z^d} \dfrac{g_n (\omega)}{(1+|n|^2)^{s/2}} e^{inx},$$
where $\{g_n\}$ are independent standard complex-valued Gaussian variables on a probability space $(\Omega , \mathcal{A},\mathbb{P})$. For $\sigma \in \R$, it is easy to check that
$$\mathbb{E} \Big[\sum_{n\in \Z^d} (1+|n|^2)^{\sigma} \Big|\dfrac{g_n}{(1+|n|^2)^{s/2}} \Big|^2 \Big]<\infty \ ~\hbox{iff}~ \ \sigma<s-\dfrac{d}{2}.$$
So this random series converges in $\mathbb{L}^2 (\Omega , H^\sigma (\T^d))$ if and only if $\sigma < s- \dfrac{d}{2}$. In what follows, we will always take $s=1$. 
We see that, when $d=2$, solutions distributed according to $\mu_2$ do not have a finite mass, which implies that equation \eqref{cubic-defoc} and \eqref{rNLS} are not equivalent. 
The previous remarks also show that the support of  this Gibbs measure is necessarily rough. We point out that an alternative method to construct invariant measures 
(with possibly smooth support) is the so-called Inviscid-Infinite-dimensional limits method due to Kuksin and Shirikyan \cite{MR2039838,MR3443633}. 
We refer to \cite{MR4709547,MR4312285,MR4503168} 
for applications related to NLS.

We set $V_r = \{\varphi \in L^2 | E(\varphi)=r\}$, for $r>0$ where
\begin{equation}
\label{lastdefE}
E(\varphi)=\int_{\T^d } |\varphi|^2 dx - \mathbbm{1}_{\{d=2\} } \; \sum_{k\neq 0} \dfrac{2}{|k|^{2}} . \end{equation}

In section \ref{renormalized}, we prove that this quantity is well-defined in dimension $2$. Our main result reads as follows :

\begin{thm}	\label{main-intro}
Let $d=1$ or $d=2$. Then, for all $r>0$, there exists a probability measure $\sigma_r$ defined on $L^2$ with support on $V_r$ and a flow $u_t$ solution of (1) if $d=1$ or (4) of $d=2$, defined $\sigma_r$-a.e. for all $t\in \mathbb R$ with respect to which $\sigma_r$ is invariant, namely
$$\frac{d (u_t )_\ast \sigma_r}{d\sigma_r}= 1,  \quad  \forall t\in \R .$$
\end{thm}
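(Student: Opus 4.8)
The plan is to realize $\sigma_r$ as the disintegration of an invariant weighted Gaussian (Gibbs) measure along the level sets of the conserved mass functional $E$, and to read off invariance from the invariance of the ambient measure combined with the conservation of $E$ under the flow. Fixing $s=1$, I would work on the abstract Wiener space in the sense of \cite{G} attached to the centered Gaussian measure $\mu_2$, viewed as the law of $\sum_n g_n(1+|n|^2)^{-1/2}e^{inx}$. In dimension $1$ the mass $\int_{\T}|\varphi|^2\,dx$ is finite $\mu_2$-a.s., so $E$ is a genuine functional; in dimension $2$ it is not, and the first task (carried out in Section~\ref{renormalized}) is to show that the renormalized mass $E$ of \eqref{lastdefE} exists as an $L^2(\mu_2)$-limit of its Fourier truncations, the counterterm $\sum_{k\neq 0}2/|k|^2$ being essentially the expectation of the truncated mass. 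The decisive analytic input is then that $E$ is non-degenerate in the Malliavin sense, namely $E\in\mathbb{D}^\infty$ with $|DE|^{-1}$ possessing moments of all orders; this makes the Airault--Malliavin coarea formula \cite{AM} applicable and produces a disintegration $\mu_2=\int_\R \tilde\sigma_r\,\rho(r)\,dr$ into Gaussian surface measures $\tilde\sigma_r$ carried by $V_r=\{E=r\}$, which by quasi-sure analysis \cite{M} ignore sets of capacity zero. One then sets $\sigma_r=Z_r^{-1}e^{-\frac14\int :|\varphi|^4:\,dx}\,\tilde\sigma_r$, the Wick-ordered quartic density being the same weight that turns $\mu_2$ into the Gibbs measure; checking that this density is $\tilde\sigma_r$-integrable and positive, so that $\sigma_r$ is a genuine probability measure on $V_r$, is the first delicate point in dimension $2$.

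\textbf{The flow defined $\sigma_r$-a.e.} For the dynamics I would rely on global well-posedness on the statistical support: the truncation and compactness scheme of Bourgain in $d=1$, and its Wick-ordered counterpart \cite{B96} (together with the deterministic theory of \cite{herr2024strichartz}) in $d=2$, provide a global flow $u_t$ of \eqref{cubic-defoc} for $d=1$, resp. \eqref{rNLS} for $d=2$, defined and unique off an exceptional set. Since each $\sigma_r$ charges no set of capacity zero, it suffices to know that this exceptional set has capacity zero---a quasi-sure rather than merely almost-sure statement---to conclude that $u_t$ is well-defined $\sigma_r$-a.e. for every $r>0$. Because $E$ is conserved by the flow (ordinary mass conservation in $d=1$; conservation of the renormalized mass in $d=2$, the counterterm being time-independent), the flow preserves each $V_r$, so $(u_t)_\ast\sigma_r$ is again carried by $V_r$.

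\textbf{Invariance, and the main obstacle.} The weighted measure $d\nu=Z^{-1}e^{-\frac14\int :|\varphi|^4:\,dx}\,d\mu_2$ is invariant under the flow, this being the classical conclusion of the same truncation scheme. Disintegrating the identity $(u_t)_\ast\nu=\nu$ along $E$ and using that $E$ is conserved, one gets that $\{(u_t)_\ast\sigma_r\}_r$ and $\{\sigma_r\}_r$ are two disintegrations of $\nu$ over the level sets of $E$; by uniqueness of the disintegration they agree, that is $(u_t)_\ast\sigma_r=\sigma_r$, for $(E_\ast\nu)$-almost every $r$. The main obstacle is to upgrade this ``almost every $r$'' to \emph{every} $r>0$, as demanded by the statement: this requires weak continuity of $r\mapsto\sigma_r$, which I would extract from the coarea formula and the non-degeneracy bounds on $E$, and it requires the capacity-zero exceptional set of the flow to be negligible for each individual $\sigma_r$. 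Securing these two quasi-sure statements---continuity in $r$ and capacity-zero exceptional sets seen by every surface measure---together with the integrability of the renormalized quartic density on $V_r$ in $d=2$, is where the bulk of the work lies.
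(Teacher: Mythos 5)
Your construction of $\sigma_r$ (Airault--Malliavin disintegration along the renormalised mass, then weighting by the Wick-ordered quartic density) is essentially the paper's: the paper checks that $E$ and $\|\nabla E\|^{-1}_{H^1}$ lie in $W_\infty$ (Propositions 4.1--4.2) and applies Theorem 2.1 directly to the weighted measure $\mu$ of \cite{B96}, whose density with respect to $\mu_2$ lies in every $L^p$, so the two descriptions agree up to normalisation. The genuine gap is in the dynamical part. You take the flow as given --- almost-sure global well-posedness for the Wick-ordered equation from \cite{B96} and \cite{herr2024strichartz} --- and then observe that restricting it to $V_r$ requires the exceptional set to have \emph{capacity zero}, since $\sigma_r$ is singular with respect to $\mu$ (each $V_r$ is $\mu$-null) and almost-sure statements transfer to surface measures only through capacities. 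That quasi-sure refinement of global well-posedness is precisely what is not available in the literature, and your proposal does not prove it; everything downstream (the flow being defined $\sigma_r$-a.e., conservation of $E$ along it, invariance of $\sigma_r$) rests on this unproved step. The paper never uses a pre-existing flow. It constructs one on each level set: (i) the Galerkin flows $v^n_t$ of the truncated fields $:B:^n$ are shown to leave $\sigma_r$ invariant for each fixed $r$ with $\rho(r)>0$, via an integration-by-parts identity on the Wiener space showing that $A-:B:^n$ has vanishing $\sigma_r$-divergence (this uses $\langle (A-:B:^n)(\varphi),\nabla E(\varphi)\rangle=0$, i.e.\ conservation of the mass by the truncated dynamics); (ii) the laws of the components $[v^n]^k$ are tight on $C(\R^+;\mathbb{C})$, and Prokhorov--Skorokhod yield, on a new probability space, processes $u^n\to u$ almost surely; (iii) one passes to the limit in the Duhamel formula using the convergence $:B:^n\to\,:B:^\ast$ in $L^p_{\sigma_r}$ together with Egorov-type arguments and the quasi-sure redefinition; (iv) existence and uniqueness of the flow then follow from the DiPerna--Lions theory on Wiener spaces of Ambrosio--Figalli \cite{AF} through the continuity equation. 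The solution is thus a probabilistically weak one built by compactness on $V_r$ itself, not the restriction of a deterministic flow.

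Your invariance argument inherits the same structural problem: disintegrating $(u_t)_\ast\nu=\nu$ and invoking uniqueness of disintegrations yields $(u_t)_\ast\sigma_r=\sigma_r$ only for $(E_\ast\nu)$-almost every $r$, and the upgrade to a fixed given $r$ --- which you correctly flag as the main obstacle --- is left unresolved: the weak continuity of $r\mapsto\sigma_r$ composed with the flow is not established, and it again presupposes that the flow exists $\sigma_r$-a.e.\ for that particular $r$. The paper sidesteps this entirely: invariance is proved for each fixed admissible $r$ at the level of the truncated flows and survives the passage to the limit, so no almost-everywhere-in-$r$ statement ever needs upgrading.
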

See Theorem \ref{Th5.1} for a more precise statement.

Let us also point out that our result can be easily generalised to more general dissipative equations, 
namely in \eqref{eqintro}, we could have taken any power of the Laplace operator
 $(-\Delta)^s$, $s>0$ as well as any odd power nonlinearities. In the case of a more general dispersion of the form 
 $(-\Delta)^s,$ the difference would have been the value of the parameter $a=2$ in the above renormalisation to define $E(\varphi)$ (see \eqref{lastdefE}).
 Instead of taking it equal to $2$, it would have been chosen as $a=2s$.

In section \ref{preliminaries}, we describe the Gaussian measures we will use, define the Sobolev and Wiener spaces.
Finally, we state a result by  Airault and Malliavin \cite{AM} which constructs a surface measure supported by a level set  of 
some  functional defined on a Wiener space; this is the main ingredient in the construction of our invariant measure.
In section \ref{estim-B}, we prove Sobolev estimates on the non linear term projected on low frequencies; in dimension 2 this will require a renormalisation of this cubic term. 
A similar renormalisation was also considered in \cite{B96}. In Section $4$, we prove several estimates for a renormalisation of the mass. 
Finally, we prove our main result  in Section $5$. Some technical lemma is proved in the appendix. 

\vskip 12mm

\section{Preliminaries}	\label{preliminaries} 
Let us first  state several basic facts about Gaussian measures. 
Next, we rewrite the cubic Schr\" odinger equation into frequency. 

\subsection{The Gaussian measures}
 Given a function $\varphi \in L^2({\mathbb T}^d)$, set $\varphi = \sum_{k\in \Z^d} \varphi_k e_k$, where 
  $e_k (x)= \frac{1}{2} e^{ikx}$, $k\in \Z^d$, with $kx=k_1 x_1+\ldots  +k_d x_d$.

For $a>0$ let us consider  the Gaussian measures on $L^2({\mathbb T}^d)$
$$d\mu_a = \Pi_{k\in \Z^d} d \mu_a^k ,\quad 
d\mu_a^k (z)= \frac{|k|^{a}}{2\pi}e^{-\frac{1}{2} |k|^a |z|^2} dx dy,$$
with $z=x+iy \in \mathbb{C}$,  $k= (k_1,\ldots , k_d) \in \Z^d$ and $|k|^2 = k_1^2+\ldots + k_d^2$.

Suppose that $\{ \varphi_k , k\in \Z^d\}$ are independent  complex-valued 
Gaussian random variables with distribution $\mu_a^k$. 
For later purpose, let us notice that
$$E_{\mu_a}(\varphi_k)=0,\ E_{\mu_a} (\varphi_k \varphi_{k^\prime})=0,\ E_{\mu_a} (\varphi_k \overline{\varphi_{k^\prime}})=\delta_{k,k^\prime}  \; \frac{2}{|k|^{a}},$$
and, for any $p\geq 1$,
\begin{equation}
\label{formvar}
\mathbb{E}_{\mu_a} (|\varphi_k|^{2p})=\dfrac{c_p}{|k|^{ap}}, \quad \mbox{\rm where} \quad 
c_p= 2^p p!.
\end{equation} 
Indeed, letting $x=Re (\varphi_k)$, $y= Im (\varphi_k)$, 
we have
\begin{align*}
\mathbb{E}_{\mu_a} (|\varphi_k|^{2p}) &=
\frac{ |k|^a}{2\pi}  \int_{\R \times \R}(x^2 +y^2 )^p e^{- |k|^a \frac{x^2 +y^2}{2}} dx dy\\
&=  |k|^{a} \int_{\R^+ } r^{2p} e^{- |k|^a \frac{r^2}{2}}r dr\\
&=  2^p p!  \,  |k|^{-ap}. 
\end{align*}

\subsection{The Sobolev spaces} We next  recall some definitions of  functional spaces such as Sobolev spaces and Wiener spaces. 
The Sobolev spaces of order $\beta$ on the torus are defined by

$$H^{\beta} =\Big\{  \varphi \in L^2  : \sum_{  k\neq 0} |k|^{2\beta} |\varphi_k |^2 < +\infty \Big\}$$
and are endowed with the corresponding Hilbert scalar product.

We will work with the Gaussian measure $\mu_2$. This measure is supported in the space $H^{\beta}$ with $\beta <0$. 
This is a consequence of the fact that
$\int \sum_{ k\neq 0}  |k|^{2\beta} |\varphi_k |^2 d\mu_2 =2\sum_{ k\neq 0} \frac{|k|^{2\beta}}{|k|^2} $ converges for $\beta <0$; 
it is also a consequence of $\mu_2$ being the measure induced by  the process 
$\sum_{|n|>0}  \frac{1}{n} G_n e_n (x)$, where $\{G_n\}_n$ are independent standard complex Gaussian random variables.

 For $\beta <0$,  the triple $(H^{\beta}, H^1, \mu_2 )$ is an abstract Wiener space in the sense of Gross (see \cite{G}).

Given a Hilbert space $G$, we consider the Malliavin derivative of a functional $F: H^{\beta} \rightarrow  G$,  in the Cameron-Martin direction $v\in H^1$, namely the (a.e.)  limit 
$$\lim_{\varepsilon \rightarrow 0} \frac{1}{\varepsilon} (F(\varphi +\varepsilon  v ) -F(\varphi )), 
\quad \forall  \varphi \in H^\beta, \forall  v\in H^1 .$$
By the Riesz theorem this derivative gives rise to a linear gradient operator $\nabla$ such that the limit above coincides with $<\nabla F (\varphi ), v>$. 
One can iterate this procedure and obtain higher order gradients $\nabla^r$. Let us for instance consider the second derivative of $F$. For $\varphi \in H^\beta$, and $v,w \in H^1$,
 iterating the first derivative, we obtain
$$ \nabla^2 F(\varphi)(v,w)= D_v D_w F(\varphi).$$
We define the Hilbert-Schmidt norm
$$\|\nabla^2 F(\varphi)\|^2_{H.S.(H^1 \otimes H^1 , H^\beta)} =\sum_{j,k} \|D_{\hat{e}_j} D_{\hat{e}_k} F(\varphi )\|^2_{H^\beta},$$
where $\{\hat{e}_j\}_j$ is an orthonormal basis of $H^1$. The corresponding Sobolev spaces are 
$W_r^p (H^\beta; G)$, that is 
 the space of maps $F: H^\beta \rightarrow G$ such that 
$F\in L_{\mu_2}^p (H^\beta ; G)$, the gradients $\nabla^s F : H^\beta \rightarrow H.S. (\otimes_s H^1 , G)$ are defined for every $1\leq s \leq r$ and belong to 
$L_{\mu_2}^p (H^\beta ; H.S. (\otimes_s H^1 , G))$. 
We equip this space with the norm
$$\|f\|_{W_r^p (H^\beta ; G)} = \|f \|_{L_{\mu_2}^p (H^\beta ; G) } + \sum_{1\leq s \leq r} \|f\|_{L_{\mu_2}^p (H^\beta ; H.S. (\otimes_s H^1 , G) }.$$
We also denote $W_{\infty} = \bigcap_{r,p} W_{r}^p (H^\beta ; \mathbb{C})$ 
 and $W_{\infty} (H^\beta) = \bigcap_{r,p} W_{r}^p (H^\beta ; H^\beta)$.

\subsection{The nonlinear Schr\"odinger equation}
  Using the decomposition of the solution $\varphi(x,t)=\sum_k \varphi_k(t) e_k(x)$  into frequencies, the cubic Schr\" odinger equation \eqref{cubic-defoc}  for $\varphi$ 
 can be rewritten as the family of equations 
$$\dfrac{d}{dt}\varphi_k = -i |k|^2 \varphi_k -i B_k (\varphi), \quad |k|>0, $$
 where 
$$B_k (\varphi)=  \sum_{l,m} \varphi_{k-m} \varphi_{l+m} \overline{\varphi}_l .$$
The functions $\{e_k\}_k$ are eigenfunctions for the operator $-\Delta$ with eigenvalues $|k|^2 =k_1^2+\ldots  +k_d^2$; 
 we set $A_k (\varphi )=-|k|^2 \varphi_k$, corresponding to the Laplacian $A=\Delta$.

For any $n\in \N$, we define 
$$B^n (\varphi)= \Pi_n (B(\Pi_n (\varphi))),$$
where 
$$\Pi_n \Big(\sum_{k} \varphi_k e_k\Big) = \sum_{k: 1\leq |k|^2 \leq n} \varphi_k e_k. $$

\subsection{Surface measures}
When $g\in W_\infty$, it is possible to  consider a redefinition of  $g$,  namely a function $g^\ast$ that coincides with $g$ almost surely   for $\mu_2$, 
 and is defined and continuous 
outside sets of capacity zero (c.f \cite{M}). Using such redifinitions a construction of surface measures on Wiener spaces was made in \cite{AM}. 
These measures are defined on level sets of $W_\infty$ functionals,  and do not charge zero  capacity sets. 

Let $E$ be a real-valued functional defined on $H^{\beta}$ and consider the surfaces $V_r =\{\varphi | E(\varphi)=r\}$ for $r>0$.
Assume that $E$ and $\| \nabla E \|^{-1}_{H^1}$ belong to the corresponding spaces $W_\infty$.

Denote the $C^\infty$ densities of $d(E \ast \mu_2)$ (resp. $d(E \ast g \mu_2)$) with respect to the Lebesgue measure by $\rho (r)= \dfrac{d (E\ast \mu_2)}{dr}$ 
(resp. $\rho_g (r)= \dfrac{d (E\ast g\mu_2)}{dr}$); they  are well defined and smooth, cf. \cite{AM}.
We have 
\begin{thm} \cite{AM}
\label{thmredi}
Let $r>0$ be such that $\rho (r)>0$. Then there exists a Borel probability measure defined on $H^\beta$, denoted by $\nu^r$, with support on $V_r$ and such that
$$\int_{V_r} g^\ast (\varphi) d\nu^r = \dfrac{\rho_g (r)}{\rho (r)},$$
for any $g^\ast$ redifinition of $g$.
\end{thm}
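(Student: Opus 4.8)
The plan is to realize $\nu^r$ as the regular conditional law of $\mu_2$ given $\{E=r\}$, and then to use Malliavin's integration by parts to upgrade the abstract disintegration (which only holds for $E\ast\mu_2$-almost every $r$) into a statement valid for \emph{every} $r$ with $\rho(r)>0$, in the quasi-sure sense. First I would record the two defining identities: for every bounded Borel $f:\R\to\R$,
$$\int_{H^\beta} f(E(\varphi))\,g(\varphi)\,d\mu_2(\varphi)=\int_\R f(r)\,\rho_g(r)\,dr, \qquad \int_{H^\beta} f(E(\varphi))\,d\mu_2(\varphi)=\int_\R f(r)\,\rho(r)\,dr,$$
which are nothing but the definitions of the pushforward densities $\rho_g=\frac{d(E\ast g\mu_2)}{dr}$ and $\rho=\frac{d(E\ast\mu_2)}{dr}$ against Lebesgue measure.

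The key analytic input is that these densities are smooth, and this is where the hypotheses $E,\|\nabla E\|^{-1}_{H^1}\in W_\infty$ enter. I would introduce the transversal Cameron-Martin field
$$N=\frac{\nabla E}{\|\nabla E\|_{H^1}^2},$$
which is well defined $\mu_2$-a.e., has all its Malliavin-Sobolev norms finite by nondegeneracy, and satisfies $D_N E=\langle\nabla E,N\rangle_{H^1}=1$, so that $D_N(\phi(E))=\phi'(E)$ for smooth $\phi$. Writing $\delta$ for the divergence operator (the $L^2_{\mu_2}$-adjoint of $\nabla$), the integration-by-parts formula on the Wiener space yields, for every test function $\phi$,
$$\int_\R \phi'(r)\,\rho_g(r)\,dr=\int_{H^\beta} \phi'(E)\,g\,d\mu_2=\int_{H^\beta} D_N(\phi(E))\,g\,d\mu_2=\int_{H^\beta}\phi(E)\,\delta(gN)\,d\mu_2=\int_\R \phi(r)\,\rho_{\delta(gN)}(r)\,dr.$$
This identifies the distributional derivative of $\rho_g$ with $-\rho_{\delta(gN)}$, and since $\delta(gN)\in W_\infty$ again, iterating the identity gives $\rho_g\in C^\infty(\R)$; taking $g\equiv1$ gives $\rho\in C^\infty(\R)$ as well.

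With smoothness in hand, for $r$ such that $\rho(r)>0$ I would define $\nu^r$ through the conditioning limit
$$\int_{V_r} g^\ast\,d\nu^r:=\lim_{\e\downarrow0}\frac{\displaystyle\int_{\{|E-r|<\e\}} g\,d\mu_2}{\mu_2(\{|E-r|<\e\})} =\lim_{\e\downarrow0}\frac{\int_{r-\e}^{r+\e}\rho_g(s)\,ds}{\int_{r-\e}^{r+\e}\rho(s)\,ds}=\frac{\rho_g(r)}{\rho(r)},$$
where the middle equality applies the defining identities to $f=\mathbbm{1}_{(r-\e,r+\e)}$ and the last one uses the continuity of $\rho_g,\rho$ together with $\rho(r)>0$. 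The map $g\mapsto\rho_g(r)/\rho(r)$ is linear, positive (if $g\ge0$ then $g\mu_2\ge0$, so $\rho_g\ge0$) and normalized ($\rho_1=\rho$), hence a candidate mean of a probability measure; it remains to represent it by a genuine Borel measure concentrated on $V_r$.

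The representation step is the main obstacle. One must show that $g\mapsto\rho_g(r)/\rho(r)$ is continuous not merely in $L^p(\mu_2)$ but in the finer capacity (Sobolev-Malliavin) norms, so that it factors through the quasi-continuous redefinition $g^\ast$ and extends to a positive functional on bounded quasi-continuous functions. For this I would extract capacity bounds for $\rho_g(r)$ by iterating the identity above: each $r$-derivative is controlled by $\mathbb{E}_{\mu_2}\big[|g|\,|\Theta_k|\big]$ for explicit $\Theta_k\in W_\infty$ built from $\delta$ and $N$, which bounds the functional by Malliavin-Sobolev norms of $g$, uniformly for $s$ near $r$. A Riesz representation then produces a Borel probability measure $\nu^r$, and the capacity control forces it to charge no set of capacity zero. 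Finally, localizing $g$ so that it vanishes on a neighborhood $\{|E-r|<\delta\}$ makes $\rho_g\equiv0$ near $r$, hence $\int g^\ast\,d\nu^r=0$; this shows $\nu^r$ is concentrated on $V_r$ and completes the identification $\int_{V_r}g^\ast\,d\nu^r=\rho_g(r)/\rho(r)$.
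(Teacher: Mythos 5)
First, a point of comparison: the paper does not prove this statement at all --- it is imported verbatim from Airault--Malliavin \cite{AM}, so your attempt has to be measured against the construction in that reference, whose outline you essentially reproduce. The first half of your argument is correct and is the standard mechanism: the hypotheses $E\in W_\infty$ and $\|\nabla E\|^{-1}_{H^1}\in W_\infty$ (supplied in the paper by Propositions \ref{Prop_EW} and \ref{Prop_E^-1W}) make all Malliavin--Sobolev norms of the transversal field $N=\nabla E/\|\nabla E\|_{H^1}^{2}$ finite, the identity $D_N E=1$ and the duality between $\nabla$ and $\delta$ give $\rho_g'=-\rho_{\delta(gN)}$ in the sense of distributions, and since $\delta(gN)$ is again in $W_\infty$ the iteration puts every derivative of $\rho_g$ in $L^1(\R)$, whence $\rho_g,\rho\in C^\infty(\R)$; the identification of the candidate functional $g\mapsto \rho_g(r)/\rho(r)$ as a conditioning limit then follows from continuity of the densities, exactly as you write.

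The genuine gap is in the step you yourself flag as ``the main obstacle'' and then dispatch with ``a Riesz representation then produces a Borel probability measure.'' The space $H^\beta$ is an infinite-dimensional Polish space, not locally compact, and a positive normalized linear functional on bounded (quasi-)continuous functions on such a space need \emph{not} be $\sigma$-additive: the dual of $C_b(H^\beta)$ contains purely finitely additive functionals, so Riesz representation in the form you invoke simply does not apply. What actually closes this step in \cite{AM} --- and in Malliavin's quasi-sure analysis generally --- is the tightness (Radon property, i.e.\ inner regularity by compact sets) of the $(k,p)$-capacities on an abstract Wiener space, equivalently the theorem that a \emph{positive} generalized Wiener functional, i.e.\ a positive element of the dual of a Sobolev space over $(H^\beta,H^1,\mu_2)$, is automatically represented by a Radon measure of finite energy charging no set of capacity zero (Sugita's theorem). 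Your capacity bounds $|\rho_g^{(k)}(r)|\lesssim E_{\mu_2}[\,|g|\,|\Theta_k|\,]$ are precisely the hypotheses feeding into that theorem, but the theorem itself --- a Daniell-type continuity under monotone limits, which is where tightness of capacities enters --- is the technical heart of the result and cannot be replaced by a locally compact Riesz argument. Once that ingredient is supplied, the remainder of your proof (no charge on capacity-zero sets, hence independence of the choice of redefinition $g^\ast$; localization of $g$ away from $\{E=r\}$ to get $\nu^r(V_r)=1$) goes through as you describe.
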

This will be a crucial ingredient to construct our invariant measures.

\section{Norm estimates for $B$ }  \label{estim-B}

Our goal in this section is to derive Sobolev estimates for the vector field $B$. 
We will see that we need to renormalize $B$ when the dimension $d=2$. We proceed as follows: for every $N\in \N$  and $k\in {\mathbb Z}^2$, we let

\begin{equation} 	\label{:B:}
(:B:)_k^N= B_k - 4 \; \Big(\sum_{m:1\leq |m| \leq |N|} \frac{1}{|m|^a}\Big) \varphi_k,
\end{equation}
with $a=2$. Notice that this sum diverges when $|N| \rightarrow \infty$ if $d=2$. In dimension $1$, a renormalisation is not needed (since the sum is converging).
 More precisely our aim in this section is to show that in dimension 2 $(:B:)^N$ belongs to $ L^{2p}_{\mu_a} (H^\beta , H^\beta)$ and $\nabla^{s}( :B:)^N$ 
 belongs to $L^{2p}_{\mu_a} (H^\beta , H.S. (\otimes^{s} H^1 ,H^\beta))$, for any $p\geq 1$, $s\in \N^\ast$ and $\beta <0$, uniformly in $N$.
  Thanks to our estimate, we will be able to prove that $(:B:)^N$ converges in $ L^{2p}_{\mu_a} (H^\beta , H^\beta)$. We will denote its limit by $:B:$. We begin by estimating $(:B:)^N$.

\begin{prop}
\label{prop1}
 Let $a=2$ and $\beta <0$ and $p\geq 1$ an integer.

In dimension 1,   $B$ is $L^p$-integrable with respect to the measure $\mu_a$ and to the norm $H^\beta$.

In dimension 2,  $(:B:)^N$  is $L^p$-integrable with respect to the measure $\mu_a$ and to the norm $H^\beta$, uniformly in $N$. 
We denote by $:B:$  its limit  as $N\to \infty$.

\end{prop}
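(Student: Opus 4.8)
\emph{Plan.} The structural fact driving the proof is that, after removing the resonant contribution, each Fourier coefficient $(:B:)_k^N$ is a (frequency truncated) homogeneous cubic polynomial in the complex Gaussians $\{\varphi_k\}$, hence lives in the third Wiener chaos; in dimension $1$ the same holds for $B_k$, together with a \emph{finite} first-chaos piece. I would therefore first reduce the $L^p$ bound to a second-moment computation by Gaussian hypercontractivity, then evaluate that second moment by Wick's theorem, the only genuinely delicate point being a convolution estimate in dimension $2$. Writing $\|(:B:)^N\|_{H^\beta}^2=\sum_{k\neq 0}|k|^{2\beta}\,|(:B:)_k^N|^2$ and applying Minkowski's inequality in $L^p(\mu_a)$ to this nonnegative series gives
$$\big\|\,\|(:B:)^N\|_{H^\beta}^2\,\big\|_{L^p(\mu_a)}\le \sum_{k\neq 0}|k|^{2\beta}\,\big\|(:B:)_k^N\big\|_{L^{2p}(\mu_a)}^2 .$$
Since $(:B:)_k^N$ belongs to a fixed Wiener chaos, Nelson's hypercontractivity yields $\|(:B:)_k^N\|_{L^{2p}(\mu_a)}\le c_p\,\|(:B:)_k^N\|_{L^{2}(\mu_a)}$ with $c_p$ independent of $k$ and $N$, so it suffices to bound $\mathbb{E}_{\mu_a}\big[\|(:B:)^N\|_{H^\beta}^2\big]=\sum_{k\neq 0}|k|^{2\beta}\,\mathbb{E}_{\mu_a}\big[|(:B:)_k^N|^2\big]$ uniformly in $N$. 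In dimension $1$ one argues identically with $B$ in place of $(:B:)^N$, the extra first-chaos term being of degree one and thus also covered by hypercontractivity.

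\textbf{Second moment via Wick's theorem.} Expanding $(:B:)_k^N=\sum_{k_1+k_2-k_3=k}\varphi_{k_1}\varphi_{k_2}\overline{\varphi}_{k_3}$ (frequencies truncated at level $N$), the subtracted term $4\big(\sum_{1\le|m|\le N}|m|^{-a}\big)\varphi_k$ of \eqref{:B:} is exactly the mean of the self-contractions $k_1=k_3$ and $k_2=k_3$, so $(:B:)_k^N$ is the Wick-ordered cube, with vanishing first-chaos part. Computing $\mathbb{E}_{\mu_a}[|(:B:)_k^N|^2]$ by pairing each $\varphi$ with an $\overline{\varphi}$ across the two factors and discarding the self-contractions forbidden by the Wick ordering, only the cross-pairings $\{k_1,k_2\}\leftrightarrow\{k_1',k_2'\}$ and $k_3\leftrightarrow k_3'$ survive, and with \eqref{formvar} this yields
$$\mathbb{E}_{\mu_a}\big[|(:B:)_k^N|^2\big]\le C\sum_{\substack{k_1+k_2-k_3=k\\ k_i\neq 0}}\frac{1}{|k_1|^{a}|k_2|^{a}|k_3|^{a}}=:C\,S_k ,$$
with $a=2$ and $C$ absolute; truncation at level $N$ only restricts the range of summation, so $C\,S_k$ dominates the left-hand side uniformly in $N$. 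In dimension $1$ the resonant constant $\sum_{m\neq0}|m|^{-2}$ is finite, which is precisely why no renormalization is needed and why $B$ itself is integrable.

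\textbf{Convolution estimate and conclusion.} It remains to show $\sum_{k\neq0}|k|^{2\beta}S_k<\infty$ for every $\beta<0$. Eliminating $k_3=k_1+k_2-k$ and summing first in $k_1$, I would use the two-dimensional convolution bound $\sum_{k_1\neq0}\frac{1}{|k_1|^{2}|k_1-q|^{2}}\le C\,\frac{\log(2+|q|)}{1+|q|^{2}}$, then sum in $k_2$ against $|k_2|^{-2}$, to obtain $S_k\le C\,\frac{(\log(2+|k|))^{2}}{1+|k|^{2}}$. Since $2\beta<0$, the weighted series $\sum_{k\neq0}|k|^{2\beta}\frac{(\log(2+|k|))^2}{1+|k|^2}$ has summation exponent strictly larger than $2$ and hence converges for all $\beta<0$; the dimension-$1$ case is strictly easier. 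This gives the uniform-in-$N$ integrability. Finally, the difference $(:B:)^N-(:B:)^{N'}$ is again a third-chaos vector whose $L^2(H^\beta)$ norm is controlled by the tail $\sum_{k}|k|^{2\beta}\sum_{\max_i|k_i|>\min(N,N')}\prod_i|k_i|^{-2}$, which tends to $0$ by dominated convergence; thus the sequence is Cauchy in $L^{2}_{\mu_a}(H^\beta,H^\beta)$ and, by the reduction above, in every $L^{2p}_{\mu_a}(H^\beta,H^\beta)$, and I define $:B:$ as its limit.

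\textbf{Main difficulty.} The crux is the dimension-$2$ convolution estimate: one must extract the sharp decay $|k|^{-2}$, up to logarithmic factors, from the triple sum $S_k$, since this is exactly what lets the weight $|k|^{2\beta}$ with arbitrarily small $|\beta|$ produce a convergent series and thus cover all $\beta<0$. The Wick combinatorics is routine but must be organized so that the renormalization in \eqref{:B:} is seen to cancel precisely the divergent resonant (first-chaos) contribution, without which the second moment would be infinite.
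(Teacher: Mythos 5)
Your proof is correct, but it follows a genuinely different route from the paper's. The paper proves the proposition by brute force: it expands $|B_k(\varphi)|^2$ into the sixth-order Gaussian moments $T_k(l,m,l',m')$ and enumerates by hand (its Cases 1--3) all index coincidences giving a nonzero expectation, isolates the divergent contributions $4D_N^2 E[|\varphi_k|^2]+4D_N E[|\varphi_k|^4]$, checks that the choice $C_N=2D_N$ cancels them exactly, and controls the remaining sums ${\mathcal I}_{N,1}(k),{\mathcal I}_{N,2}(k)$ via the appendix Lemma \ref{convseries}; for higher moments $p>1$ it uses a recursion that the authors themselves flag as only formal (``we use erroneously the independence \dots but the argument can be made rigorous''). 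You instead make the structural observation that the subtracted constant in \eqref{:B:} is exactly the first-chaos projection of $B_k$ (including the factor $2$ coming from the diagonal term $|\varphi_k|^2\varphi_k$), so that $(:B:)_k^N$ lies purely in the third Wiener chaos, and then invoke Nelson's hypercontractivity plus Minkowski's inequality to reduce all $L^{2p}$ bounds to the $p=1$ case with constants automatically uniform in $k$ and $N$. This replaces the paper's formal induction by a rigorous one-line reduction, and it also explains conceptually why $4\sum_{1\le |m|\le N}|m|^{-a}$ is the right renormalisation constant. The remaining second-moment computation is morally the same Wick-type calculation as the paper's case analysis (your surviving cross-pairings $\{k_1,k_2\}=\{k_1',k_2'\}$, $k_3=k_3'$ correspond to their Cases 1.2 and 2.2), and your iterated convolution bound $S_k\lesssim (\log(2+|k|))^2/(1+|k|^2)$ plays exactly the role of the paper's estimate \eqref{convseries1}, which is proved there by splitting the $k$-sum into the three regions $|k|\le(1-\lambda)|l+m|$, $|k|\ge(1+\lambda)|l+m|$, $|k|\approx|l+m|$; your version yields sharper pointwise-in-$k$ decay, while theirs bounds the triple sum directly. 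Finally, your tail/Cauchy argument for the existence of the limit $:B:$ in every $L^{2p}_{\mu_a}(H^\beta,H^\beta)$ is more explicit than what the paper records. In short: same second-moment core, but your chaos-plus-hypercontractivity architecture is cleaner and closes the gap the paper leaves open at the level of higher moments.
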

Notice that our choice of parameters is related to the fact that we want to work on the abstract Wiener space $(H^\beta ,H^1 ,\mu_2)$, with $\beta<0$.
\begin{proof}
By abuse of notation, in this proof, we will drop the subcript $N$ in $:B:^N$ in dimension 2.  We will work at fixed $N$ and will notice at the end that, thanks to the normalisation,  
 our estimates do not depend on $N$. 
 
Let $p$ be odd; we are going to show that $:B:\, \in L^{2p}_{\mu_a} (H^\beta , H^\beta)$.
In a first step, we  compute  
$E_{\mu_a}  (|B|^{2p} )$.  
\smallskip

For every $k\in \Z$ with $k\neq 0$ we have 
$$|B_k (\varphi)|^2 =  \sum_{l,l'\neq 0, m,m', |l|\vee |m|\vee |l'|\vee |m'| \leq N}T_k(l,m,l',m'), $$
 where, for $m\not\in \{k, -l\}$ and $m'\not\in \{ k, -l'\}$,  
\[ T_k(l,m,l',m'):= \varphi_{k-m} \varphi_{l+m}\varphi_{l'}  \overline{\varphi_l}  \overline{\varphi_{k-m'}} \overline{\varphi_{l'+m'}}.\]
Note that $E_{\mu_a} [T_k(l,m,l',m')^{p} ]= 0$ except in three cases: $k-m=l$, $k-m=k-m'$ and $k-m=l'+m'$.\bigskip

\noindent {\it Case 1}\; Let  $k-m=l$;  then we have to impose that either $l+m=k-m'$ or $l+m=l'+m'$.

 {\it Case 1.1} \; If $l+m=k-m'$, we deduce that, since $k-l=m=m+m'$, we have $m'=0$. Therefore, $T_k(l,k-l,l',0)= |\varphi_k|^2 |\varphi_l|^2 |\varphi_{l'}|^2$. 
Thus, 
\begin{itemize} 
\item if  $m\neq 0$, $k\neq l$  and $k\neq l'$,  the indices $k,l,l'$ are pairwise different;
\item if  $m\neq 0$ and $l=l'$, then $T_k(l,k-l,l,0)= |\varphi_k|^2 |\varphi_l|^4 $;
\item if  $m\neq 0$ and $l'=k$, then $T_k(l, k-l, k, 0) = |\varphi_k|^4 |\varphi_l|^2 $;
\item if $m=m'=0$ and $l'\neq k$, then $T_k(k,0,l',0)=  |\varphi_k|^4 |\varphi_{l'}|^2 $;
\item if $m=m'\neq 0$ and $l=l'\neq k$, then $T_k(l,k-l,l,k-l) = |\varphi_k|^2 |\varphi_l|^4 $;
\item finally, if $m=m'=0$ and $l'= k$, then $T_k(k,0,k,0) =  |\varphi_k|^6 $. 
\end{itemize} 

{\it Case 1.2}\;   If $l+m=l'+m'$, we deduce that $k=l+m=l'+m'$ and hence $k-m'=l'$. Therefore, $T_k(l,k-l,l',k-l')= |\varphi_k|^2 |\varphi_l|^2 |\varphi_{l'}|^2$. Thus, 
\begin{itemize}
\item if $m\neq 0$, $m'\neq 0$ and $l\neq l'$, the indices $k,l,l'$ are pairwise different; 
\item if $m=0$ and $m'\neq 0$, then $T_k(k,0,l',k-l')=  |\varphi_k|^4 |\varphi_l'|^2 $.
\end{itemize} 
In the other cases, that is $m\neq 0$ and $m'=0$ (resp. $m=m'=0$), we have again $T_k(l,k-l,k,0)$ (resp. $T_k(k,0,k,0)$).
\bigskip

\noindent {\it Case 2} \; Let $k-m=k-m'$, which implies $m=m'$.  Then we have to impose either $l+m=l$ or $l+m=l'+m'$. 

{\it Case 2.1} \;  If $l+m=l$, we deduce $m=m'=0$, so that $l'=l'+m'$. Therefore, $T_k(l,0, l',0)=|\varphi_k|^2 |\varphi_l|^2 |\varphi_{l'}|^2$.  Thus
\begin{itemize}
\item the indices $k,l,l'$ can be pairwise distinct;
\item if $k=l'\neq l$, then $T_k(l,0,k,0)= |\varphi_k|^4 |\varphi_l|^2 $;
\end{itemize}
In the other cases $k=l\neq l'$ (resp.$ k=l=l'$), we have again $T_k(k,0,l',0)$ (resp.  $T_k(k,0,k,0)$). 

{\it Case 2.2}\;  If $l+m=l'+m'$, since $m=m'$ we conclude that $l=l'$. Therefore, $T_k(l,m,l, m)=|\varphi_{k-m}|^2 |\varphi_{l+m}|^2 |\varphi_{l}|^2$. Thus, 
\begin{itemize}
\item if  $m\neq 0$, $l\neq k-m$ and $l\neq l-2m$, the indices $k-m, l, l+m$ are pairwise distinct; 
\item if $m\neq 0$ and  $l+m=k-m$, then $l\neq k-m$ and $T_k(k-2m, m, k-2m,m)= |\varphi_{k-m}|^4 |\varphi_{k-2m}|^2$; 
\item if $m=0$ and $l\neq k$, then $T_k(l,0,l,0) = |\varphi_k|^2 |\varphi_l|^4$; 
\end{itemize}
Finally, when $m=0$ and $k=l$ (resp. $m\neq 0$ and $l=k-m$)  we have again $T_k(k,0,k,0)$ (resp. $T_k(l, k-l, l, k-l)$). 
\bigskip

\noindent{\it Case 3} Let $k-m=l'+m'$; then we have to impose either $l+m=l$ or $l+m= k-m'$.

{\it Case 3.1}\;  If $l+m=l$, that is $m=0$, we have $k=l'+m'$. Therefore, $T_k(l,0,l',k-l')=|\varphi_k|^2 |\varphi_l|^2 |\varphi_{l'}|^2$. Thus,
\begin{itemize}
\item the indices $k,l,l'$ can be pairwise distinct;
\item if $k\neq l=l'$, then $T_k(l,0,l,k-l)= |\varphi_k|^2 |\varphi_l|^4$.
\end{itemize}
When $k=l\neq l'$ (resp. $k=l'\neq l$), we have again $T_k(k,0,l',k-l')$ (resp. $T_k(l,0,k,0)$).\\
 Finally, if $k=l=l'$, we have again $T_k(k,0,k,0)$. 

{\it Case 3.2} \;  If  $l+m=k-m'$, we deduce that $m+m'=k-l=k-l'$, so that  $l'=l$. Therefore, $T_k(l,m,l,k-m-l)=|\varphi_{k-m}|^2 |\varphi_{l+m}|^2 |\varphi_{l}|^2$. Thus
\begin{itemize}
\item if $m\neq 0$, $l\neq k-m$ and $l \neq k-2m$, the indices $k-m, l ,l+m$ are pairwise distinct; 
\item if $m\neq 0$ and $l=k-m$, then $T_k(l,k-l,l,0)=|\varphi_{l}|^4 |\varphi_{k}|^2 $.
\end{itemize} 
The other cases $m\neq 0$ and $l+m=k-m$ (resp. $m=0$ and $l\neq k$) give again $T_k(k-2m,m,k-2m,m)$ (resp. $T_k(l,0,l,k-l)$. 
Finally, the case $m=0$ and $l=k$ gives again $T_k(k,0,k,0)$. 
\smallskip

As a summary, we have proven that
\begin{align} \label{B_k^2p}
 E_{\mu_a}&  (| B_k(\varphi) |^{2p})= E[|\varphi_k|^{6p}] 
+ 4 \sum_{\stackrel{|l|\leq N, |l'|\leq N}{ l\neq k, l'\neq k, l\neq l'}}  E[ |\varphi_k|^{2p}] E[ |\varphi_l|^{2p}] E[ | \varphi_{l'}|^{2p}] \nonumber \\
&+ 2\;    \sum_{\stackrel{|l|\neq N, l\neq k,  1\leq |m|\leq N} { l+m\neq k-m}} E[ |\varphi_{k-m}|^{2p}] E[ |\varphi_{l+m} |^{2p}] E[ | \varphi_{l}|^{2p}] \nonumber  \\
&+4 \sum_{|l|< N, l\neq k} E[ |\varphi_k|^{4p}] E[ |\varphi_l|^{2p}] +4  \sum_{|l|\leq N, l\neq k}  E[ |\varphi_k|^{2p}] E[ |\varphi_l|^{4p}] \nonumber \\
&+ \sum_{\stackrel{1\leq |m| \leq N}{ m\neq k, 2m\neq k}}\!\! E[ |\varphi_{k-m}|^{2p}]E[ | \varphi_{k-2m}|^{4p}] .
\end{align} 

We first deal with $p=1$.  Set 
\[ D_N:=\sum_{l:0<|l|\leq N} \int |\varphi_l|^2 d\mu_a.\] 
The above discussion implies that 
\begin{equation}  \label{EB_k^2} 
E_{\mu_a} [ |B_k(\varphi)| ^2] = 4 D_N^2 E_{\mu_a}[|\varphi_k|^2] + 4 D_N E_{\mu_a} [|\varphi_k|^4] + {\mathcal I}_{N,1}(k) + {\mathcal I}_{N,2}(k), \end{equation}
where 
\begin{align*}
{\mathcal I}_{N,1}(k)  = &\,  E_{\mu_a}[|\varphi_k|^6] +  4 E_{\mu_a}[ |\varphi_k|^2]  \sum_{0<|l|\leq N, l\neq k} E_{\mu_a}[|\varphi_l|^4] \\
&\, + { \sum_{ \stackrel{0<|m|\leq N}{ (k-m)(k-2m)\neq 0 } } } \!\! E_{\mu_a} [|\varphi_{k-m}|^4 ] E[ |\varphi_{k-2m}|^2],\\
{\mathcal I}_{N,2}(k) =  
&\,  2  \;   { \sum_{\stackrel{0<|m|\leq N, 0<|l|\leq N, (k-m)(l+m)\neq 0}  {  l+m\neq k, l+2m\neq k}} } E_{\mu_a} [|\varphi_{k-m}|^2] E_{\mu_a} [|\varphi_{l}|^2] E_{\mu_a} [|\varphi_{l+m}|^2] . 
\end{align*}

 In dimension 1, it is easy to see that  $\sup_N [D_N+{\mathcal I}_{N,1}(k)+{\mathcal I}_{N,2}(k) ] <\infty$.  Indeed, let us consider for instance ${\mathcal I}_{N,2}(k)$. { 
 Notice that 
 $\sup_{m\neq k}  E_{\mu_a} [|\varphi_{k-m}|^2]<\infty $}
  and $\sum_{\stackrel{0<|m|\leq N} {  l+m\neq 0}} E_{\mu_a} [|\varphi_{l+m}|^2] \leq C$, for a constant $C$ not depending on $l$ and $N$. 
 Then the result follows from the fact that  $\sup_N \sum_{0<|l|\leq N} E_{\mu_a} [|\varphi_{l}|^2] <\infty $. 
\smallskip

In dimension 2,  the identity \eqref{formvar} and the upper estimate \eqref{convseries2} in Lemma \ref{convseries} imply that
for $\beta <0$ we have 
\[ \sup_N \sum_{0<|k|\leq N} |k|^{2\beta} {\mathcal I}_{N,1}(k) <\infty,\]
and the upper estimate \eqref{convseries1} implies 
\[ \sup_N \sum_{0<|k|\leq N} |k|^{2\beta} {\mathcal I}_{N,2}(k) <\infty.\]

A similar (easier) computation, based on $m=0$ {or  $ l=k-m\neq k$},  implies that for $\tilde{T}_k(l,m)= \varphi_{k-m} \varphi_{l+m} \overline{\varphi_l} \overline{\varphi_k}$,  we have 
\begin{align} \label{tildeT}
 \sum_{ |l|,|m|\leq N} E_{\mu_a}[ \mbox{\rm Re }( \tilde{T}_k(l,m) ]=& E[|\varphi_k|^4] +2  \sum_{1\leq |l|\leq N, l\neq k}  E[|\varphi_k|^2] E[|\varphi_l|^2]. 
\end{align}

Using \eqref{EB_k^2} and \eqref{tildeT} we deduce that 
\begin{align*}
E_{\mu_a} &[|B_k (\varphi)-C_N \varphi_k|^{2}] =
 4 D_N^2  E[ |\varphi_k|^{2}] +4   D_N E[ |\varphi_k|^{4}] +  {\mathcal I}_{N,1} (k) \\
& + {\mathcal I}_{N,2}(k)  -2 C_N  E[|\varphi_k|^4]  
 -2 C_N  E[|\varphi_k|^2] 2 D_N 
+ C_N^2 E[|\varphi_k|^2]. 
\end{align*} 
Therefore, if $C_N=2D_N$, we have 
\[  
E_{\mu_a} [|B_k (\varphi)-C_N \varphi_k|^{2}] =  {\mathcal I}_{N,1}(k) + {\mathcal I}_{N,2}(k) , \] 
and, for $\beta <0$, 
\[ \sup_N \sum_{0<|k| \leq N} |k|^{-2\beta} E_{\mu_a} [|B_k (\varphi)-C_N \varphi_k|^{2}] <\infty,\]
 which proves that $:B:\; \in L^2_{\mu_a}(H^\beta , H^\beta)$.
\smallskip

 For more general integers $p$, we proceed recursively. Indeed, we have formally (to simplify computations, we use erroneously the independence of 
 $ |B_k (\varphi)-C_N \varphi_k|^{2p} $ and $|B_k|^2 - C_N^2 |\varphi_k|^2$,  but the  argument can be made rigorous)  
\begin{align*}
&E_{\mu_a}\big(  |B_k (\varphi)-C_N \varphi_k|^{2p+2}\big) \\
&=  E_{\mu_a} \big( |B_k (\varphi)-C_N \varphi_k|^{2p} [|B_k (\varphi)|^2 -2 C_N Re( \varphi_k \overline{B_k} )+ C_N^2 |\varphi_k|^2] \big)\\
&=  E_{\mu_a} \big( |B_k (\varphi)-C_N \varphi_k|^{2p} [|B_k (\varphi)|^2 - C_N^2 |\varphi_k|^2] \big)\\
&=  E_{\mu_a} \big( |B_k (\varphi)-C_N \varphi_k|^{2p} [  {\mathcal I}_{N,1}(k) + {\mathcal I}_{N,2}(k)  ] \big)\\
&\leq C   E_{\mu_a} \big( |B_k (\varphi)-C_N \varphi_k|^{2p} \big).
\end{align*}
This concludes the proof.

\end{proof}
Next, we turn to estimate the first derivative of $:B:$. We have
\begin{prop}
 Let $d=1,2$. For $a=2$ and $\beta <0$, the gradient of the vector field $B$ for $d=1$ (resp. $:B:$ for $d=2$) belongs to $L^{2p}_{\mu_a} (H^\beta ; H.S. (H^1 ,H^\beta))$ for any integer $p\geq 1$.

\end{prop}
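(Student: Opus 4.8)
The plan is to compute the Malliavin gradient of $B$ (resp. $:B:$) explicitly in Fourier variables, to reduce its Hilbert--Schmidt norm to a weighted sum of squared second-chaos quantities, and then to estimate the moments of that sum by Wick's theorem combined with Lemma \ref{convseries}; as in Proposition \ref{prop1}, the renormalisation is exactly what produces uniformity in $N$ when $d=2$.

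First I would compute the gradient. For a Cameron--Martin direction $v=\sum_j v_j e_j\in H^1$, differentiating $B_k$ and collecting the coefficients of $v_j$ and $\overline v_j$ gives
$$D_v B_k(\varphi)=\sum_j\big[2\,P_{k,j}(\varphi)\,v_j+Q_{k,j}(\varphi)\,\overline v_j\big],\qquad P_{k,j}=\sum_l \varphi_{l+k-j}\overline{\varphi}_l,\quad Q_{k,j}=\sum_m \varphi_{k-m}\varphi_{j+m}.$$
The diagonal coefficient $2P_{k,k}=2\sum_l|\varphi_l|^2$ is precisely where a divergence occurs in dimension $2$. Since $D_v(:B:)_k=D_vB_k-C_N v_k$ and $C_N=2D_N=2\sum_{0<|l|\le N}E_{\mu_a}|\varphi_l|^2$ (by \eqref{:B:} and the choice of constant in Proposition \ref{prop1}), the coefficient of $v_k$ in $D_v(:B:)_k$ becomes the Wick-ordered quantity $2\big(\sum_l|\varphi_l|^2-D_N\big)$. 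Setting $\tilde P_{k,j}=P_{k,j}$ for $j\ne k$ and $\tilde P_{k,k}=\sum_l\big(|\varphi_l|^2-E_{\mu_a}|\varphi_l|^2\big)$, I obtain $D_v(:B:)_k=\sum_j[2\tilde P_{k,j}v_j+Q_{k,j}\overline v_j]$.

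Next I would evaluate the Hilbert--Schmidt norm using the real orthonormal basis of $H^1$ given by $|j|^{-1}e_j$ and $i|j|^{-1}e_j$, $j\ne0$. A direct computation, using $|a+b|^2+|a-b|^2=2|a|^2+2|b|^2$, yields
$$\|\nabla(:B:)(\varphi)\|^2_{H.S.(H^1,H^\beta)}=\sum_{k\ne0}|k|^{2\beta}\sum_{j\ne0}\frac{2}{|j|^2}\big[4|\tilde P_{k,j}|^2+|Q_{k,j}|^2\big].$$
Writing $X$ for the right-hand side, I would first bound $E_{\mu_a}[X]$. By Wick's theorem and \eqref{formvar}, for $j\ne k$ one has $E_{\mu_a}|P_{k,j}|^2=4\sum_l|l|^{-2}|l+k-j|^{-2}$ and $E_{\mu_a}|Q_{k,j}|^2=8\sum_l|l|^{-2}|k+j-l|^{-2}$, while the renormalised diagonal satisfies $E_{\mu_a}|\tilde P_{k,k}|^2=\sum_l \mathrm{Var}(|\varphi_l|^2)=4\sum_l|l|^{-4}$, which is finite in dimension $2$. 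The diagonal contribution to $E_{\mu_a}[X]$ is then $\lesssim\big(\sum_l|l|^{-4}\big)\big(\sum_k|k|^{2\beta-2}\big)<\infty$ for $\beta<0$, whereas the off-diagonal and $Q$ contributions are the convolution sums handled by the estimates \eqref{convseries1}--\eqref{convseries2} of Lemma \ref{convseries}. Had we not renormalised, the diagonal would instead produce $D_N^2\sum_k|k|^{2\beta-2}$, which blows up with $N$ in dimension $2$; this is the step where the renormalisation is indispensable. In dimension $1$ no renormalisation is needed and the same computation with $B$ in place of $:B:$ applies, all series converging directly.

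Finally, to pass from $E_{\mu_a}[X]$ to $E_{\mu_a}[X^p]$, I would note that each $\tilde P_{k,j}$ and $Q_{k,j}$ lies in the second Wiener chaos (the squares $\varphi_n^2$ appearing in $Q$ are mean-zero since $E_{\mu_a}\varphi_n^2=0$), so $X$ lies in $\bigoplus_{n\le4}\mathcal H_n$; Nelson's hypercontractivity then gives $E_{\mu_a}[X^p]\le C_p\,(E_{\mu_a}[X^2])^{p/2}$, and $E_{\mu_a}[X^2]$ is bounded uniformly in $N$ by the same Wick-theorem-plus-Lemma \ref{convseries} scheme (alternatively one may run the recursion used at the end of the proof of Proposition \ref{prop1}). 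The main obstacle is twofold: organising the fourth- and eighth-order Gaussian moments into the convolution sums covered by Lemma \ref{convseries}, and, conceptually, checking that the single constant $C_N$ chosen to centre $B$ in Proposition \ref{prop1} automatically Wick-orders the divergent diagonal term $\sum_l|\varphi_l|^2$ of the gradient. Once this cancellation is verified, the remaining estimates are of the type already established.
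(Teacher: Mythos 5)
Your proposal is correct, and its core coincides with the paper's own proof: you compute the gradient explicitly in Fourier variables (your $2P_{k,j}v_j+Q_{k,j}\overline{v_j}$ is the paper's $D_{e_j}B_k=\sum_{l}\varphi_{l+k-j}(2\overline{\varphi_l}+\varphi_{-l})$, up to the real-versus-complex convention for derivative directions), you observe that $D_{e_j}(:B:)_k=D_{e_j}B_k-C_N\delta_{j,k}$ so that the renormalisation exactly centres the divergent diagonal coefficient $2\sum_l|\varphi_l|^2$, and you control the resulting weighted sums by Wick's theorem, \eqref{formvar} and Lemma \ref{convseries} --- precisely the paper's scheme, whose key cancellation $4D_N^2-2C_N\cdot 2D_N+C_N^2=0$ is exactly your centring of the diagonal term $\tilde P_{k,k}$. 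The genuine difference is the passage to higher moments: for $p\ge 2$ the paper runs a recursion on $E_{\mu_a}\big[|D_{e_j}(:B:)_k|^{2p+2}\big]$ which it itself admits uses ``erroneously the independence'' of the factors (to be made rigorous), whereas you note that $X=\|\nabla(:B:)\|^2_{H.S.}$ lies in the Wiener chaoses of order at most $4$ and invoke Nelson's hypercontractivity, $E_{\mu_a}[X^p]\le C_p\,(E_{\mu_a}[X^2])^{p/2}$. This is cleaner and fully rigorous, and gives uniformity in $N$ for all $p$ at once; the only point you should make explicit is the uniform-in-$N$ bound on $\|X\|_{L^2}$ (or directly on $\|X\|_{L^p}$), which follows from Minkowski's inequality combined with hypercontractivity applied to each second-chaos variable $\tilde P_{k,j}$, $Q_{k,j}$ separately, thereby reducing everything to the first-moment estimate you already established. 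Finally, your bookkeeping of the Hilbert--Schmidt norm through the real orthonormal basis $\{|j|^{-1}e_j,\, i|j|^{-1}e_j\}$ and the parallelogram identity is slightly more careful than the paper's estimate along the directions $e_j$ only; this changes nothing beyond constants.
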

\begin{proof}
As in the previous proof, we work at fixed $N$ but we drop the subscript $N$ in $(:B:)^N$.
A straight-forward computation gives  
\begin{align*}
D_{e_j} B_k (\varphi)&
= \sum_{l,m} (\varphi_{l+m} \overline{\varphi_l} \delta_{j,k-m}+ \varphi_{k-m} \overline{\varphi_l} \delta_{j,l+m}+ \varphi_{l+m} \varphi_{k-m} \delta_{-j,l}) \\
&=  \sum_{l\notin \{ 0, j-k\}} \varphi_{l+k-j} ( 2\overline{\varphi_l} + \varphi_{-l}).  
\end{align*}
Thus, we have
$$|D_{e_j} B_k (\varphi)|^2 = \sum_{l,l^\prime \notin \{ 0, j-k\}} \varphi_{l+k-j} (2 \overline{\varphi_l}+ \varphi_l )  \overline{\varphi_{l^\prime+k-j}} (2\varphi_{l^\prime}+ \overline{\varphi_{l^\prime}}) .$$

Let us first consider  the $1$-dimensional case. Since
$$\|\nabla B(\varphi) (\hat{e}_j)\|_{H^\beta}^2 \leq c \sum_{k} \dfrac{|k|^{2\beta}}{|j|^{2}}\;  \Big| \sum_{l\notin \{ 0, j-k\}}
 \varphi_{l+k-j}  (2 \overline{\varphi_l} + \varphi_{-l})  \Big|^2, $$
 we deduce
\begin{align*}
E_{\mu_a} \big[ \|\nabla B (\varphi)\|^2_{H.S. (H^1 ,H^\beta) }\big] &\leq  c \sum_{k,j} \dfrac{|k|^{2\beta}}{|j|^{2}} \;  E_{\mu_a} \Big[ \sum_{l,l'\notin \{0,  j-k\}}   T_{j,k}(l,l')\Big],
\end{align*}
 where $T_{j,k}(l,l')=\varphi_{l+k-j}  (2\overline{\varphi_l} +\varphi_{-l}) \overline{\varphi_{l'+k-j}} (2 \varphi_{l'} + \overline{\varphi_{-l'}}) $.
\bigskip

\noindent {\it Case 1:} $l=l'$, $T_{j,k}(l,l) = |\varphi_{l+k-j}|^2 (4 |\varphi_l|^2 + 2 \overline{\varphi_{l}} \overline{\varphi_{-l}} + 2 \varphi_{-l} \varphi_l  + |\varphi_{-l}|^2)$. 

\;  $\bullet$ If $k=j$, $E_{\mu_a}[T_{j,j}(l,l) ]=4  E_{\mu_a}[  |\varphi_l|^4 ] + E_{\mu_a} [|\varphi_l|^2 ] E_{\mu_a} [ |\varphi_{-l}|^2]$. 

\;  $\bullet$ If $k\neq j$ and $2l+k-j\neq 0$, $E_{\mu_a}[T_{j,k}(l,l) ] = 4 E_{\mu_a}[|\varphi_{l+k-j}|^2] E_{\mu_a}[ |\varphi_l|^2 ] + E_{\mu_a}[|\varphi_{l+k-j}|^2]  E_{\mu_a} [|\varphi_{-l}|^2] $. 

\;  $\bullet$ If  $k\neq j$ and $2l+k-j=  0$, $E_{\mu_a}[T_{j,k}(l,l) ]=  4 E_{\mu_a}[|\varphi_{l+k-j}|^2] E_{\mu_a}[ |\varphi_l|^2 ]  + E_{\mu_a}[|\varphi_{l+k-j}|^4]$.  
\bigskip

\noindent{\it Case 2:} $l\neq l'$. 

\indent $\bullet$ If $k=j$,  $T_{j,j}(l,l') = 4  |\varphi_l|^2   |\varphi_{l'}|^2 + 2 |\varphi_{l'}|^2 \varphi_l  \varphi_{-l}  + 2 |\varphi_l|^2 \overline{\varphi_{l'}} \overline {\varphi_{-l'}}
+ \overline{ \varphi_{l'}} \overline{ \varphi_{-l'}}$.  Therefore,
$E_{\mu_a}[ T_{j,j}(l,l') ] = 4  E_{\mu_a} [ |\varphi_l|^2]  E_{\mu_a}[  |\varphi_{l'}|^2]$.  

\indent $\bullet$ If $k\neq j$, $E_{\mu_a}[T_{j,k}(l,l')]=0$, except if $ l+k-j=-l'$. In that case we have
$E_{\mu_a}[T_{j,k}(l,l')] = E_{\mu_a} [|\varphi_{l+k-j}|^2] E_{\mu_a} [|\varphi_{-l}|^2]$ if $l+k-j\neq -l$, and $E_{\mu_a} [|\varphi_{l+k-j}|^4]$ if
$l+k-j=  -l$. 
\bigskip

 For $a=2$ and $\beta <0$, using once more \eqref{convseries1}, 
  we have 
\begin{align*}
\sum_{k\neq 0} &\frac{|k|^{2\beta}}{|k|^2} \Big[ \sum_{l\neq 0} |l|^{-2a} + \sum_{l,l' \neq 0, l\neq l'} |l|^{-a} |l'|^{-a} \Big] \\
& + 
 \sum_{k,j\neq 0, k\neq j} \dfrac{|k|^{2\beta}}{|j|^{2}}  \sum_{l\notin \{0, j-k\}}\big(  |l|^{-a}|l+k-j|^{-a} +
 |l+k-j|^{-2a}\big) <\infty.
\end{align*}

Next, we deal with the $2$-dimensional case. We need to be more careful in this case and we begin by computing $\|D_{e_j} B_k (\varphi)\|_{L^2_{\mu_a}}$. As in the one-dimensional case, we have
\begin{align*}
\int & |D_{e_j}B_k(\varphi)|^2 d\mu_a =   \delta_{j,k} \Big(  4 \sum_{l\neq 0}  E_{\mu_a}[|\varphi_l|^4] + \sum_{l\neq 0} E_{\mu_a}[|\varphi_l|^2]  E_{\mu_a}[|\varphi_{-l}|^2]\\
& \qquad \qquad + 4 \sum_{l,l'\neq 0, l\neq l'} E_{\mu_a} [|\varphi_l|^2] E_{\mu_a} [|\varphi_{l'}|^2|] \Big)\\
& + 1_{\{j\neq k\} } \Big( 4 \sum_{l  \notin \{0, j-k\} } E_{\mu_a}[ |\varphi_{l+k-j}|^2]  E_{\mu_a}[ |\varphi_l|^2]  +2 1_{\{ j-k \; {\rm even}\}} E_{\mu_a} [ |\varphi_{(k-j)/2}|^4]  \\
& \qquad \qquad +2  { \sum_{l,2l \notin \{0, j-k\}} } E_{\mu_a}[ |\varphi_{l+k-j}|^2]  E_{\mu_a}[ |\varphi_{-l}|^2] \Big).
\end{align*}
Let 
\begin{align*}
 {\mathcal J}_N&(j,k)= \delta_{j,k} \Big(  4 \sum_{l\neq 0}  E_{\mu_a}[|\varphi_l|^4] + \sum_{l\neq 0} E_{\mu_a}[|\varphi_l|^2]  E_{\mu_a}[|\varphi_{-l}|^2] \Big) \\
&+  1_{\{j\neq k\} }\Big(  4 \sum_{l  \notin \{0, j-k\} } E_{\mu_a}[ |\varphi_{l+k-j}|^2]  E_{\mu_a}[ |\varphi_l|^2]  +2 1_{\{ j-k \; {\rm even}\}} E_{\mu_a} [ |\varphi_{(k-j)/2}|^4]  \\
& \qquad \qquad +2 {  \sum_{l,2l \notin \{0, j-k\}} } E_{\mu_a}[ |\varphi_{l+k-j}|^2]  E_{\mu_a}[ |\varphi_{-l}|^2] \Big).
\end{align*}

Using \eqref{:B:}, we deduce that 
$$D_{e_j} (:B :)_k (\varphi) = D_{e_j} B_k (\varphi) - C_N \delta_{j,k} .$$
Therefore, since $C_N=2 D_N$, 
\begin{align*}
\int & |D_{e_j}B_k(\varphi)|^2 d\mu_a =   {\mathcal J}_N(j,k) + \delta_{j,k}\big( 4 D_N^2 - 2C_N \, 2D_N + C_N^2\big)  = {\mathcal J}_N(j,k) . 
\end{align*}
Using \eqref{formvar} and \eqref{convseries1} in Lemma \ref{convseries}, we deduce that
$$\sup_N \sum_{j,k} \frac{|k|^{2\beta}}{|j|^2} {\mathcal J}_N(j,k) <\infty.$$
For more general integers $p$, we proceed recursively. Indeed, we have formally (as previously, we use erroneously the independence
 of $ |D_{e_j}B_k(\varphi)|^2$ and $|D_{e_j}B_k(\varphi)|^{2p}$, but the argument can be made rigorous)  
\begin{align*}
E_{\mu_a}\big(  |D_{e_j}B_k(\varphi)|^{2p+2}\big) 
&=  E_{\mu_a} \big( |D_{e_j}B_k(\varphi)|^{2p} |D_{e_j}B_k(\varphi)|^2 \big)\\
&=  E_{\mu_a} \big( |D_{e_j}B_k(\varphi)|^{2p}  {\mathcal J}_N(j,k)  \big)\\
&\leq C   E_{\mu_a} \big( |D_{e_j}B_k(\varphi) |^{2p} \big).
\end{align*}
This concludes the proof.
\end{proof}

Using the definition of the second derivatives and corresponding norms presented in section 2, we have the following result:

\begin{prop}
Let $a=2$, $\beta<0$; the second derivative of $B$ belongs to $L^{2p}_{\mu_a} (H^\beta , H.S. (H^1 \otimes H^1 , H^\beta))$, for any integer $p\geq 1$.

\end{prop}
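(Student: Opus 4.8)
The plan is to compute the second Malliavin derivative explicitly and to exploit that, $B_k$ being a homogeneous cubic polynomial in the Fourier coefficients, $D_{e_i}D_{e_j}B_k$ is a \emph{linear} function of $\varphi$. Starting from the first-order formula $D_{e_j}B_k(\varphi)=\sum_{l\notin\{0,j-k\}}\varphi_{l+k-j}(2\overline{\varphi_l}+\varphi_{-l})$ established in the previous proof, I would differentiate once more along $e_i$, using $D_{e_i}\varphi_p=\delta_{p,i}$ and $D_{e_i}\overline{\varphi_p}=\delta_{p,-i}$, to obtain, up to the finitely many excluded indices,
\[
D_{e_i}D_{e_j}B_k(\varphi)=2\,\overline{\varphi_{i+j-k}}+4\,\varphi_{k-i-j}.
\]
Two observations then organise the argument. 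First, the renormalisation term $C_N\varphi_k$ in \eqref{:B:} is linear, so its second derivative vanishes; hence $B$ and $(:B:)^N$ have the same second derivative, no renormalisation is needed at this order, and it suffices to argue for $B$ itself in both dimensions. Second, writing $q=i+j-k$, the expression $2\overline{\varphi_q}+4\varphi_{-q}$ is, for $q\neq0$, a single centred complex Gaussian whose real and imaginary parts are i.i.d.; consequently $E_{\mu_a}|D_{e_i}D_{e_j}B_k|^2=40\,|q|^{-a}$ and, more generally, it obeys the moment identity \eqref{formvar}.

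I would treat the case $p=1$ first. Expanding the Hilbert--Schmidt norm in the orthonormal basis $\hat e_j=e_j/|j|$ of $H^1$ (so that $D_{\hat e_j}=|j|^{-1}D_{e_j}$), inserting the variance just computed and setting $a=2$, I get the bound
\[
E_{\mu_2}\big\|\nabla^2 B(\varphi)\big\|^2_{H.S.(H^1\otimes H^1,H^\beta)}\le C\!\!\sum_{\substack{i,j,k\neq0\\ i+j\neq k}}\!\!\frac{|k|^{2\beta}}{|i|^2\,|j|^2\,|i+j-k|^2}.
\]
The convergence of this triple series is then reduced, exactly as in the two preceding propositions, to the convolution estimates of Lemma \ref{convseries}: summing first over $k$ produces, by \eqref{convseries1}, a factor controlled by a negative power of $|i+j|$ (with a possible logarithmic correction coming from the borderline exponent $|i+j-k|^{-2}$ in dimension two), after which the remaining summation over $i$ and $j$ converges thanks to $\beta<0$. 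In dimension one the same computation is strictly simpler and requires no borderline estimate.

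For a general integer $p\geq1$ I would avoid the recursive ``erroneous independence'' device used before, since here the Gaussian moment identity holds exactly: because $D_{e_i}D_{e_j}B_k=2\overline{\varphi_q}+4\varphi_{-q}$ is a genuine complex Gaussian, one has $E_{\mu_a}|D_{e_i}D_{e_j}B_k|^{2p}=c_p\big(E_{\mu_a}|D_{e_i}D_{e_j}B_k|^2\big)^p$ with $c_p=2^pp!$ as in \eqref{formvar}. Applying Minkowski's inequality in $L^p(\mu_a)$ to the positive series defining $\|\nabla^2 B\|^2_{H.S.}$ then controls $\big\|\,\|\nabla^2 B\|^2_{H.S.}\big\|_{L^p(\mu_a)}$ by $c_p^{1/p}$ times the same series as in the case $p=1$, which is finite; this gives $\nabla^2 B\in L^{2p}_{\mu_a}(H^\beta,H.S.(H^1\otimes H^1,H^\beta))$ for every $p$. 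I expect the only real difficulty to be the convergence of the triple sum in the two-dimensional borderline regime, where the exponent $|i+j-k|^{-2}$ is critical; this is precisely what the (logarithmically corrected) estimates \eqref{convseries1}--\eqref{convseries2} of Lemma \ref{convseries} are designed to absorb, the strict inequality $\beta<0$ furnishing the final summability.
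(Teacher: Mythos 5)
Your proposal is correct, and for the core estimate it follows the same route as the paper: you compute the second derivative explicitly, observe that it is linear in $\varphi$ (indeed $D_{e_i}D_{e_j}B_k=2\overline{\varphi_{i+j-k}}+4\varphi_{k-i-j}$, consistent with the paper's bound $\|D_{\hat e_l}D_{\hat e_m}B\|^2_{H^\beta}\lesssim (|l||m|)^{-2}\sum_k|k|^{2\beta}\big(|\varphi_{l+m-k}|^2+|\varphi_{l-m+k}|^2\big)$), reduce $E_{\mu_a}\|\nabla^2B\|^2_{H.S.}$ to the triple series $\sum_{i,j,k}|k|^{2\beta}|i|^{-2}|j|^{-2}|i+j-k|^{-2}$, and conclude by \eqref{convseries1} of Lemma \ref{convseries}. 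Where you genuinely depart from the paper is the passage to general $p$: the paper invokes the same recursive device as in the two preceding propositions (relying on an independence it admits is ``erroneous'' but claims can be made rigorous), whereas you exploit the fact that $D_{e_i}D_{e_j}B_k$ is an exact centred complex Gaussian, so that moment identities of the type \eqref{formvar} hold with no approximation, and you then apply Minkowski's inequality in $L^p(\mu_a)$ to the positive series defining $\|\nabla^2B\|^2_{H.S.}$; this bounds the $L^{2p}$ norm by a constant of order $(p!)^{1/p}$ times the same convergent series as for $p=1$. This is cleaner and fully rigorous, precisely because the second derivative of a cubic polynomial is linear in the Gaussian field --- an advantage unavailable for $B$ itself or for $\nabla B$, which is presumably why the paper treats all orders uniformly by recursion. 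Your explicit remark that the renormalisation term $C_N\varphi_k$ is invisible at second order (so no uniformity-in-$N$ issue can arise) is also a point the paper leaves implicit. Two cosmetic quibbles: the constant in your Gaussian moment identity should be $p!$ rather than $2^pp!$ when expressed in terms of $E|Z|^2$, and Lemma \ref{convseries} as stated contains no logarithmic correction --- the logarithm you anticipate appears only inside its proof, where it is absorbed using $\beta<0$; neither point affects your argument.
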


\begin{proof}
A straight-forward computation gives
$$\| D_{\hat{e}_l}D_{\hat{e}_m} B (\varphi) \|^2_{H^\beta}\lesssim \dfrac{1}{(|l||m|)^{2}} \sum_k |k|^{2\beta}  (|\varphi_{l+m-k}|^2 + |\varphi_{l-m+k}|^2 ) . $$

By definition of the Hilbert-Schmidt norm, this leads to
\begin{align*}
 \|\nabla^2 B(\varphi) \|_{H.S.(H^1 \times H^1 , H^\beta)}^2 \lesssim &\, \sum_{ l,m\neq 0}  \dfrac{1}{(|l||m|)^{2}}  \\
&\, \times \sum_{ k\not\in \{ 0, m+l,m-l\}  }
 |k|^{2\beta} ( |\varphi_{l+m-k}|^2 + |\varphi_{l-m+k}|^2 ) . 
 \end{align*}
Using once more \eqref{formvar}, we deduce that
$$E_{\mu_a} \|\nabla^2 B(\varphi) \|_{H.S.(H^1 \times H^1 , H^\beta)}^2 \lesssim  \sum_{ l,m\neq 0}  \dfrac{1}{(|l||m|)^{2}} \sum_{ k\not\in \{ 0, m+l,m-l\}  }
 |k|^{2\beta}  \dfrac{1}{|l+m-k|^a}.  $$
This last term converges when $\beta <0 $ (see \eqref{convseries1} in  Lemma \ref{convseries}). As previously, the case for more general integers $p$ can be made by induction.
\end{proof}

More generally, the following holds:

\begin{prop}  For any $ s\in \N$ and any integer $p\geq 1$, 
 $\nabla^s :B:$ belong to $L^p_{\mu_a} (H^\beta ; H.S. (\otimes^{s} H^1 , H^\beta))$    when $a=2$ and   $\beta<0$. 
\end{prop}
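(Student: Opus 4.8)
The plan is to exploit that $B$ is a polynomial of degree three in the Gaussian coordinates $\{\varphi_k,\overline{\varphi_k}\}$. Each Malliavin derivative $D_{e_j}$ lowers this degree by one, so $\nabla^s B\equiv 0$ for every $s\geq 4$, and for those $s$ the statement is trivial. Since the renormalisation in~\eqref{:B:} only subtracts the affine term $C_N\varphi_k$, whose derivatives of order $\geq 2$ vanish, one has $\nabla^s(:B:)=\nabla^s B$ for all $s\geq 2$; the case $s=0$ is Proposition \ref{prop1} and the cases $s=1,2$ are the two propositions following it. Hence the only genuinely new case is $s=3$.

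For $s=3$ I would differentiate the formula $D_{e_j}B_k(\varphi)=\sum_{l\notin\{0,j-k\}}\varphi_{l+k-j}(2\overline{\varphi_l}+\varphi_{-l})$ twice more, using $D_{e_i}\varphi_a=\delta_{i,a}$ and $D_{e_i}\overline{\varphi_a}=\delta_{-i,a}$. A direct computation gives
\[ D_{e_i}D_{e_j}B_k(\varphi)=2\,\overline{\varphi_{i+j-k}}+4\,\varphi_{k-i-j},\]
and one further differentiation yields the deterministic quantity
\[ D_{e_h}D_{e_i}D_{e_j}B_k=6\,\delta_{h+i+j,\,k}.\]
Thus $\nabla^3 B$ does not depend on $\varphi$, so its $L^p_{\mu_a}$ norm equals its Hilbert--Schmidt norm for every $p\geq 1$ and it suffices to bound this single number. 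Working in the $H^1$-orthonormal basis $\hat e_j=e_j/\|e_j\|_{H^1}$, with $\|e_j\|_{H^1}$ comparable to $|j|$, produces a factor $(|h|\,|i|\,|j|)^{-1}$ up to a fixed constant, and after collapsing the sum over $k$ onto $k=h+i+j$ one obtains
\[ \|\nabla^3 B\|^2_{H.S.(\otimes^{3}H^1,H^\beta)}\lesssim \sum_{h,i,j\neq 0,\ h+i+j\neq 0}\frac{|h+i+j|^{2\beta}}{(|h|\,|i|\,|j|)^2}.\]

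It then remains to prove that this series converges for $a=2$ and $\beta<0$, which is the main obstacle. I would reduce the number of free indices one at a time, exactly as in the preceding proofs: summing first over $h$ and applying the convolution estimate \eqref{convseries1} of Lemma \ref{convseries} bounds $\sum_h |h|^{-2}|h+i+j|^{2\beta}\lesssim|i+j|^{2\beta}$, then the same estimate in $i$ gives $\lesssim|j|^{2\beta}$, leaving $\sum_{j\neq 0}|j|^{2\beta-2}$, which converges in dimension two precisely because $\beta<0$ makes $2\beta-2<-2$. The delicate point is that in $d=2$ each bare sum $\sum_h|h|^{-2}$ diverges logarithmically, so convergence is not a matter of absolute summability of the factors but relies entirely on the decay carried by $|h+i+j|^{2\beta}$ with $\beta<0$; the estimate \eqref{convseries1} is exactly what converts this decay into a usable gain at each step.

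Finally, all the above bounds are taken at frequencies $\leq N$ and, being partial sums of a convergent series of nonnegative terms, are uniform in $N$; letting $N\to\infty$ shows $\nabla^3(:B:)^N\to\nabla^3:B:$ and gives the claim for $s=3$. For $s\geq 4$ every $\nabla^s(:B:)^N$ vanishes identically, and for $s\leq 2$ the uniformity in $N$ and the validity for all integers $p\geq 1$ are already contained in the preceding propositions (equivalently, they follow from the hypercontractivity of $\mu_a$ on the fixed finite sum of Wiener chaoses to which $:B:$ belongs). This completes the proof modulo the triple summation above, which is the crux.
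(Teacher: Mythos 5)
The paper states this proposition without any proof, so there is nothing to compare against line by line; your proposal supplies the missing argument, and it is correct in substance and entirely in the spirit of the three preceding propositions. The structural observations are exactly right: $(:B:)^N$ differs from the truncated $B$ by a term linear in $\varphi$, so $\nabla^s (:B:)^N=\nabla^s B$ for $s\geq 2$; $B$ is a cubic polynomial in the coordinates, so $\nabla^s B=0$ for $s\geq 4$; and your computations $D_{e_i}D_{e_j}B_k=2\overline{\varphi_{i+j-k}}+4\varphi_{k-i-j}$ and $D_{e_h}D_{e_i}D_{e_j}B_k=6\,\delta_{h+i+j,k}$ agree with the paper's derivative conventions ($D_{e_j}\varphi_a=\delta_{j,a}$, $D_{e_j}\overline{\varphi_a}=\delta_{-j,a}$, visible in the proof of the first-derivative proposition), reducing the whole statement to finiteness of the deterministic triple sum $\sum_{h,i,j}|h+i+j|^{2\beta}(|h|\,|i|\,|j|)^{-2}$, with all $L^p_{\mu_a}$ norms of $\nabla^3 B$ equal to that single Hilbert--Schmidt number.

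One quantitative slip should be fixed. The estimate actually contained in the proof of Lemma \ref{convseries} is $\sum_{h\neq 0,\ h\neq -v}|h|^{-2}|h+v|^{2\beta}\lesssim |v|^{\beta}$, i.e.\ the exponent is \emph{halved}, not preserved: in $d=2$ the exact convolution behaves like $|v|^{2\beta}\log|v|$, and the paper absorbs the logarithm by sacrificing half the decay. Your claimed bounds $\lesssim|i+j|^{2\beta}$ and then $\lesssim|j|^{2\beta}$ are therefore false as literally stated, but the argument survives unchanged with the corrected exponents: summing in $h$ gives $|i+j|^{\beta}$ (plus a separately convergent contribution from $i+j=0$, where the inner sum is $\sum_{h\neq 0}|h|^{2\beta-2}<\infty$), summing in $i$ then gives $|j|^{\beta/2}$, and finally $\sum_{j\neq 0}|j|^{\beta/2-2}<\infty$ precisely because $\beta<0$. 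This is exactly the mechanism you yourself identify --- each convolution must output a strictly negative exponent --- so the halving costs nothing. With that correction the proof is complete and, as far as one can tell, is the argument the authors intended but omitted.
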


\section{The renormalised mass} \label{renormalized}
We will consider level surfaces for the  renormalised mass $E$, where
$$E(\varphi)=\sum_k ( |\varphi_k|^2 -Z_k).$$
Let us recall  that the mass is an invariant of the NLS flow. We choose $Z_k$ such that $E$ is integrable with respect to $\mu_a$, i.e.,  $Z_k = c_{1} |k|^{-a}$ where $c_p$ was defined
 in \eqref{formvar}. As for $B$, this renormalisation is not needed in dimension $1$ since the series  is convergent.
  We recall that $W_{\infty} = \bigcap_{r,p} W_{r}^p (H^\beta , \mathbb{R})$.

Our goal in this section is to show that $E$ and $\|\nabla E\|^{-1}_{H.S. ( H^l ,  \mathbb R) }$ 
belong to the space $W_\infty$,
 as it was defined before, namely for $a=2$ and $l=1$ (let us recall that the norm of this space depends on $\mu_a$). 
 As previously, our choice of parameters are linked with the fact that we want to work on the abstract Wiener space $(H^\beta ,H^1 ,\mu_2)$, $\beta<0$. We begin by estimating $E$.

\begin{prop} \label{Prop_EW}
The mass $E$ belongs to $W_{\infty}$.

\end{prop}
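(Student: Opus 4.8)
The plan is to show that $E \in W_\infty = \bigcap_{r,p} W_r^p(H^\beta;\R)$, meaning that $E$ itself and all its Malliavin gradients $\nabla^s E$ are in every $L^p_{\mu_a}$ space. The key structural observation is that $E(\varphi) = \sum_k (|\varphi_k|^2 - Z_k)$ is a \emph{quadratic} functional of $\varphi$ with the constants $Z_k = c_1 |k|^{-a}$ subtracted precisely to center each term, so that $E_{\mu_a}[|\varphi_k|^2 - Z_k] = 0$ by the variance formula \eqref{formvar}. This centering is what makes the infinite sum converge in $L^2_{\mu_a}$ in dimension $2$; in dimension $1$ the bare mass already converges. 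Because $E$ is quadratic, its gradients truncate quickly: $\nabla E$ is linear in $\varphi$, $\nabla^2 E$ is a constant (deterministic) Hilbert-Schmidt operator, and $\nabla^s E = 0$ for $s \geq 3$. This finiteness of the gradient expansion is the feature that makes membership in $W_\infty$ tractable.

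First I would verify $E \in L^p_{\mu_a}$ for all $p$. Writing $E = \sum_k X_k$ with $X_k = |\varphi_k|^2 - Z_k$ a family of independent centered random variables, I would estimate the $L^p$ norm. The second moment is $\sum_k \mathrm{Var}(|\varphi_k|^2) = \sum_k (c_2 - c_1^2)|k|^{-2a}$ using \eqref{formvar}, which converges for $a=2$ and $d \leq 2$ by \eqref{convseries1} in Lemma \ref{convseries}. For higher $p$, since the $X_k$ are independent and centered with all moments controlled by $c_p |k|^{-ap}$, I would apply a Rosenthal-type inequality (or hypercontractivity on the Wiener chaos, noting $E$ lives in the second chaos) to bound $\|E\|_{L^p}$ by a combination of the $\ell^{p/2}$ and $\ell^p$ norms of the moment sequence, both finite.

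Next I would compute the gradients. A direct calculation gives $D_{\hat e_j} E = 2\,\overline{\varphi_j}$ (up to normalization conventions from $e_k$), so that $\|\nabla E\|^2_{H.S.(H^1,\R)} = \sum_j |j|^2 \cdot 4|\varphi_j|^2 \cdot |j|^{-2}$-type weighting, i.e. $\nabla E$ is a linear functional whose Hilbert-Schmidt norm is again a convergent quadratic expression in $\varphi$; its $L^p_{\mu_a}$ bounds follow exactly as for $E$. The second gradient $\nabla^2 E$ is the \emph{deterministic} diagonal operator with entries $2 \delta_{jk}$ in the $H^1$ basis, whose Hilbert-Schmidt norm $\sum_j |j|^{-2} \cdot \text{const}$ converges for $d \leq 2$; being constant it is trivially in every $L^p_{\mu_a}$. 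All gradients of order $s \geq 3$ vanish identically. Hence every required norm in the definition of $W_r^p(H^\beta;\R)$ is finite, giving $E \in W_\infty$.

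The main obstacle I anticipate is the bookkeeping for the $L^p$ estimate of $E$ at large $p$: one must confirm that the centering makes all cross terms vanish and that the surviving diagonal contributions sum absolutely, which is exactly where the convergence of $\sum_k |k|^{-2a}$ (hence the restriction to $d \leq 2$ with $a=2$) is used. Everything else is a routine consequence of the quadratic structure and the moment identity \eqref{formvar}.
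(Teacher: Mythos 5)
Your proposal is correct and follows the same overall structure as the paper's proof: exploit the quadratic nature of $E$, the centering $Z_k=c_1|k|^{-a}$, and the independence of the $\varphi_k$ to get $L^2$ convergence; compute $\nabla E(\varphi)(e_k)=2\,\mathrm{Re}(\varphi_k)$ explicitly; observe that $\nabla^2E$ is a deterministic diagonal Hilbert--Schmidt operator and that $\nabla^sE=0$ for $s\geq 3$. Where you genuinely diverge from the paper is the passage from $L^2$ to $L^p$: the paper expands $\int|E(\varphi)|^p d\mu_a$ by hand and runs a somewhat informal induction on $p$ in which cross terms are killed by centering and independence, whereas you invoke a Rosenthal-type inequality for sums of independent centered variables (or hypercontractivity, noting that $E$ lives in the second Wiener chaos, so $\|E\|_{L^p}\leq (p-1)\|E\|_{L^2}$). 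Your route is cleaner and arguably more rigorous at exactly the point where the paper's argument is sketchiest, at the cost of importing a standard but external tool; the paper's induction stays entirely elementary and self-contained.

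Two small inaccuracies in your write-up should be fixed, though neither affects the conclusion. First, the convergence of $\sum_{k\neq 0}|k|^{-2a}$ for $a=2$, $d\leq 2$ is elementary and is not what Lemma \ref{convseries} provides; that lemma concerns the more involved double and triple convolution sums needed for $B$, so the citation of \eqref{convseries1} here is misplaced. Second, in the $H^1$-orthonormal basis $\hat e_j$ the entries of $\nabla^2E$ are $2\delta_{jk}|j|^{-2}$, so the relevant quantity is the Hilbert--Schmidt norm $\bigl(\sum_j 4|j|^{-4}\bigr)^{1/2}$, not $\sum_j|j|^{-2}$ as you wrote; the latter series actually diverges on $\T^2$, while the former converges, which is what the paper's bound $C\sum_k|k|^{-4}<\infty$ records.
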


\begin{proof}
To ease notations all the sums below will be considered for indices $k$ and $l$ different from 0.  A straight-forward computation gives
\begin{align*}
&\int |E(\varphi)|^2 d\mu_a \\
&= \int \sum_k (|\varphi_k|^2 -Z_k)^2 d\mu_a+ \sum_{k\neq l} \int |\varphi_k|^2 |\varphi_l|^2 d\mu_a - 2 \sum_{k\neq l} \int Z_l |\varphi_k|^2 d\mu_a + \sum_{k\neq l} Z_k Z_l\\
&= \int \sum_k |\varphi_k|^4 d\mu_a - 2 \int \sum_k Z_k |\varphi_k|^2 d\mu_a + \sum_k Z_k^2 \\
&\qquad + \sum_{k\neq l} \int |\varphi_k|^2 |\varphi_l|^2 d\mu_a - 2 \sum_{k\neq l} \int Z_l |\varphi_k|^2 d\mu_a + \sum_{k\neq l} Z_k Z_l\\
&= \sum_k c_2 |k|^{-2a}  -2 \sum_k Z_k c_1 |k|^{-a} + \sum_k Z_k^2 \\
&\qquad +  \sum_{k\neq l} c_1 |k|^{-a}  c_1 |l|^{-a}  -2 \sum_{k\neq l} Z_l c_1 |k|^{-a} + \sum_{k\neq l} Z_k Z_l\\
&= \sum_k  (c_2 |k|^{-2a} - Z_k^2 ) \lesssim  \sum_k  |k|^{-2a} <\infty .
\end{align*}
To deal with a general integer $p\geq 2$, using the independence of the $\varphi_{k_j}, j=1, ..., p$ 
and an induction argument, we have
\begin{align*}
\int |E(\varphi)|^p d\mu_a =& \int \sum_k (|\varphi_k|^2 -Z_k)^p d\mu_a + \sum_{k_1\neq k_j} \int ( |\varphi_{k_1}|^2 -Z_{k_1}) \Pi_{k_j} ( |\varphi_{k_j}|^2 -Z_{k_j}) d\mu_a
\\
= & \int \sum_k (|\varphi_k|^2 -Z_k)^p d\mu_a + \sum_{k_1\neq k_j}  \int  |\varphi_{k_1}|^2 \Pi_{k_j}( |\varphi_{k_j}|^2 -Z_{k_j})d\mu_a 
\\
& - \sum_{k_1\neq k_j} \int  Z_{k_1} \Pi_{k_j} ( |\varphi_{k_j}|^2 -Z_{k_j}) d\mu_a  \\
\lesssim & \sum_k |k|^{-ap}+  \sum_{k_1\neq k_j}  \int  Z_{k_1} \Pi_{k_j} ( |\varphi_{k_j}|^2 -Z_{k_j}) d\mu_a
  \\
& - \sum_{k_1\neq k_j} \Pi_{k_j} \int  Z_{k_1} ( |\varphi_{k_j}|^2 -Z_{k_j}) d\mu_a\\
\lesssim &\sum_k |k|^{-ap}.
\end{align*}

Next, we have
$$\nabla E (\varphi)(e_k)= 2 Re (\varphi_k ).$$
Therefore, we deduce 
$$\| \nabla E(\varphi) \|_{L_{\mu_a}^{2m} ( H.S. ( H^1 , \mathbb R ) )}^{1/m} \leq c \sum_{k} \big( E_{\mu_a} \big[  |k|^{-2m} |\varphi_k|^{2m}\big] \big)^{1/m} 
\leq c(p) \sum_{k} \dfrac{1}{|k|^{a+2}}<\infty. $$
Finally, we also have
$$\| \nabla^2 E(\varphi) \|_{L_{\mu_2}^{2m} ( H.S. ( H^1 \otimes H^1 ,  \mathbb R) }^{1/m} \leq C   \sum_k |k|^{ -4  } <\infty , $$
which completes the proof since higher derivatives vanish.
\end{proof}

We now turn to the estimate of $\|\nabla E\|^{-1}_{H.S. ( H^1 , \mathbb R) }$.

\begin{prop} \label{Prop_E^-1W}
The random variable $\|\nabla E\|^{-1}_{H.S. ( H^1 ,  \mathbb R) }$ belongs to the space $W_\infty$.
\end{prop}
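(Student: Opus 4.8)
The plan is to reduce the whole statement to a single non-degeneracy (negative-moment) estimate for an explicit second-chaos functional, after which the regularity follows from the chain rule.

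First I would make $\|\nabla E\|_{H.S.(H^1,\mathbb R)}$ explicit. From the identity $\nabla E(\varphi)(e_k)=2\,\mathrm{Re}(\varphi_k)$ established in the proof of Proposition \ref{Prop_EW}, summing the squared directional derivatives over the orthonormal basis $\{|k|^{-1}e_k,\ i|k|^{-1}e_k : k\neq 0\}$ of $H^1$ gives
\[
\|\nabla E(\varphi)\|^2_{H.S.(H^1,\mathbb R)}=4\sum_{k\neq 0}\frac{|\varphi_k|^2}{|k|^2}=:4\,G(\varphi),
\]
which is consistent with the upper bound already obtained in Proposition \ref{Prop_EW}. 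Hence $\|\nabla E\|^{-1}_{H.S.(H^1,\mathbb R)}=\tfrac12\,G^{-1/2}$, so it suffices to prove $G^{-1/2}\in W_\infty$, i.e. that $G^{-1/2}$ and all its Malliavin derivatives lie in $L^p_{\mu_2}$ for every $p$.

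The main obstacle is to show that $G$ admits finite negative moments of every order, $E_{\mu_2}[G^{-p}]<\infty$ for all $p>0$. Write $G=\sum_{k\neq 0}Y_k$ with $Y_k=|\varphi_k|^2/|k|^2$ independent and exponentially distributed of rate $\lambda_k=|k|^4/2$ (since $E_{\mu_2}[|\varphi_k|^2]=2/|k|^2$). For any $n$ and any $n$ distinct modes $k_1,\dots,k_n$ one has $G\ge\sum_{i=1}^n Y_{k_i}$, and since the $Y_{k_i}$ are nonnegative, $\{G\le\varepsilon\}$ forces $Y_{k_i}\le\varepsilon$ for each $i$; thus by independence and $1-e^{-x}\le x$,
\[
\mathbb P(G\le\varepsilon)\le\prod_{i=1}^n\mathbb P(Y_{k_i}\le\varepsilon)\le\Big(\prod_{i=1}^n\lambda_{k_i}\Big)\varepsilon^n=C_n\,\varepsilon^n .
\]
A layer-cake computation $E_{\mu_2}[G^{-p}]=\int_0^\infty p\,t^{p-1}\,\mathbb P(G\le 1/t)\,dt$, split at $t=1$ (using $\mathbb P\le1$ near $0$ and the bound above near $\infty$), then yields $E_{\mu_2}[G^{-p}]<\infty$ whenever $p<n$; choosing $n>p$ gives all negative moments. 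Here the role of the dimension enters only through the requirement $E_{\mu_2}[G]=\sum_k 2/|k|^4<\infty$, i.e. $d<4$, which holds for $d\in\{1,2\}$.

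Finally I would differentiate. Since $G$ is quadratic in $\varphi$, its derivatives are elementary: $\nabla G$ is linear in $\varphi$ (a first-chaos element) with $\|\nabla G\|^2_{H^1}=4\sum_{k\neq 0}|\varphi_k|^2/|k|^6$, which has finite moments of all orders by Gaussian hypercontractivity on a fixed chaos; $\nabla^2 G$ is a deterministic Hilbert--Schmidt, hence bounded, operator; and $\nabla^s G=0$ for $s\ge 3$. Applying the Fa\`a di Bruno formula to $h(t)=t^{-1/2}$,
\[
\nabla^s\big(G^{-1/2}\big)=\sum_{\pi}c_\pi\,G^{-\frac12-|\pi|}\bigotimes_{B\in\pi}\nabla^{|B|}G,
\]
where $\pi$ runs over partitions of $\{1,\dots,s\}$ all of whose blocks $B$ have size at most $2$ (larger blocks contribute $\nabla^{|B|}G=0$). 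Each summand has $H.S.$-norm bounded by a negative power of $G$ times a power of $\|\nabla G\|_{H^1}$ times the constant $\|\nabla^2 G\|_{H.S.}$, so H\"older's inequality, combined with the negative-moment bound for $G$ and the hypercontractive moment bounds for $\|\nabla G\|_{H^1}$, shows $\nabla^s(G^{-1/2})\in L^p_{\mu_2}(H^\beta;H.S.(\otimes^s H^1,\mathbb R))$ for every $s$ and every $p$. Therefore $G^{-1/2}\in W_\infty$, and consequently $\|\nabla E\|^{-1}_{H.S.(H^1,\mathbb R)}\in W_\infty$, as claimed.
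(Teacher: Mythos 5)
Your proposal is correct, but it reaches the key estimate by a genuinely different route than the paper. Both arguments hinge on the same identification (implicit in the paper's computation) that $\|\nabla E\|^2_{H.S.(H^1,\mathbb R)}=4\sum_{k\neq 0}|\varphi_k|^2/|k|^2$, a sum of independent exponential random variables, and both finish the moment bound with a layer-cake integral. The difference is in the small-ball estimate: the paper applies exponential Chebyshev, computes the Laplace transform $E_{\mu_a}[e^{-tX}]$ exactly as an infinite product $\prod_k (1+8t/|k|^{a+2})^{-1}$, lower-bounds the product over the modes $|k|\leq (8t)^{1/(a+2)}$, and optimizes in $t$ to get a stretched-exponential bound $\mu_a\{X\leq\varepsilon\}\leq e^{-c\,\varepsilon^{-1/(a+1)}}$; you instead truncate to $n$ arbitrary modes, use independence and $1-e^{-x}\leq x$ to get a polynomial bound $C_n\varepsilon^n$ of every order. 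Your bound is weaker as a small-ball estimate but entirely sufficient for finite negative moments of all orders, and the argument is more elementary (no Laplace-transform optimization). In the other direction, you go further than the paper on the regularity side: the paper's proof stops at $E_{\mu_a}[|X|^{-p}]<\infty$ and leaves the Malliavin derivatives of the inverse implicit, whereas your Fa\`a di Bruno step (exploiting that $G$ is quadratic, so $\nabla^s G=0$ for $s\geq 3$ and $\nabla^2 G$ is a deterministic Hilbert--Schmidt operator) explicitly establishes that all gradients of $G^{-1/2}$ lie in every $L^p_{\mu_2}$, which is what membership in $W_\infty$ actually requires; strictly speaking one should justify the chain rule for the unbounded function $t\mapsto t^{-1/2}$ by regularizing (e.g.\ applying it to $(G+\epsilon)^{-1/2}$ and letting $\epsilon\to 0$ using the negative-moment bounds), but this is a routine approximation.
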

\begin{proof}
By Chebycheff's  inequality, we have, for all $t>0$ and $\varepsilon >0$,
\begin{align*}
\mu_a \{\|\nabla E (\varphi)\|_{H.S. ( H^1 ,  \mathbb R ) }^2 \leq \varepsilon\}& \leq e^{t\varepsilon} E_{\mu_a} \big[ e^{-t \|\nabla E(\varphi) \|_{H.S. ( H^1 , \R) }^2 }\big]\\
&\leq C e^{t\varepsilon}\prod_{k\neq 0} |k|^a \int_0^\infty e^{- 4 t |k|^{-2} r^2 - \frac{|k|^{a}}{2} r^2} rdr\\
&\leq C e^{t\varepsilon} \prod_{k\neq 0}  \dfrac{1}{1+ 8t/|k|^{a+2}}.
\end{align*}
On the other hand, we have 
\begin{align*}
\prod_{k\neq 0}   \Big(1+ \dfrac{8t}{|k|^{a+2}}\Big)& \geq \prod_{k\neq 0, |k|\leq (8t)^{1/(a+2)}}   \Big(1+ \dfrac{8t}{|k|^{a+2}}\Big) \\
&\geq 2^{(8t)^{\frac{1}{a+2}}} = e^{{\color{magenta} \tilde{c} }  t^{\frac{1}{a+2}}},
\end{align*}
 for $\tilde{c} =8^{\frac{1}{a+2}} \ln(2) <1$ when $a\geq 2$. Therefore, we deduce that, for some $c>0$, we have  
\begin{align*}
\mu_a \{\|\nabla E (\varphi)\|_{H.S. ( H^1 , \R) }^2   \leq  \varepsilon\}& \leq  \inf_{t>0}  e^{t\varepsilon - { \tilde{c} }  t^{\frac{1}{a+2}} }
=e^{-c \varepsilon^{\frac{1}{1-(a+2)}}}, { \quad \forall \epsilon >0.}
\end{align*} 
Thus, letting $\|\nabla E (\varphi)\|_{H.S. ( H^1 ,  \mathbb R )}^2=X$ and using the equality $\mu_a (1/|X| \geq \varepsilon) = \mu_a(|X|\leq 1/\varepsilon)$, we have
 \begin{align*}
E_{\mu_a}\Big(\dfrac{1}{|X|}^p\Big) 
&= \int_0^\infty p \varepsilon^{p-1}  \mu_a \Big(\dfrac{1}{|X|} \geq \varepsilon\Big) d\varepsilon \\
&  \leq \int_0^\infty p \varepsilon^{p-1}  e^{-c \varepsilon^{\frac{1}{(a+2)-1}}  }d\varepsilon <\infty 
\end{align*}
when $1<a+2$ and, in particular, for  $a=2$.
\end{proof}

\section{The NLS equation and invariant measures}
Following \cite{B96}, if we consider the renormalised NLS equation and its corresponding renormalised Hamiltonian, which is the limit in $N$ of

$$ H_N(\varphi)  =\int_{\T^2}  |\nabla \varphi|^2 +\frac{1}{2}  \int_{\T^2}  |\varphi|^4 -2 a_N   \int_{\T^2}  |\varphi|^2 +a_N^2 ,$$
with $a_N =2 \sum_{k\neq 0 :|k|<N}   \frac{1}{|k|^2} $. The corresponding Gibbs measure 
$$d\mu (\varphi) =\lim_N  e^{-{\frac{1}{2} } \int_{\T^2}  |\varphi|^4 +2 a_N  \int_{\T^2} |\varphi|^2 -a_N^2}     d\mu_2(\varphi)$$
is invariant for the renormalised flow \eqref{rNLS},  and the measures $e^{-H_N (\varphi)} ~\Pi d\varphi$ are invariant for the corresponding truncated equations. Moreover $\mu$ 
has a Radon-Nikodym derivative w.r.t. $\mu_2$ which belongs to all $L^p$ spaces (all these statements were shown in \cite{B96}).

Therefore, by Theorem 2.1 and thanks to Propositions  \ref{Prop_EW}  and \ref{Prop_E^-1W}, 
 we can repeat the construction of the surface measures on   level sets $(V_r)$ of the mass $E$,  
  starting from this weighted Wiener measure $\mu$ instead of $\mu_2$. 
 Let us denote by $\sigma_r$ this surface measure on  the level set of renormalised  mass equal to $r$.

In this section we prove  existence and uniqueness  of a global solution of the renormalised cubic nonlinear Schr\"odinger equation in the two-dimensional torus and 
 living in the level sets of the renormalised mass.
More precisely, we have

\begin{thm}		\label{Th5.1}
For each $r$ such that $\rho (r)=\frac{d(E*\mu)}{dr}>0$ there exists a probability space $(\Omega , \mathbb F, P)$ and a unique flow $u \in C(\mathbb R \times V_r )$, 
such that for every $t\in \R$ and $\varphi \in V_r$,

$$(i) \quad u_t ( \varphi ) =e^{itA}   \varphi   + \int_0^t e^{-i(t-s)A} :B:^\ast  (u_s (\varphi ))ds, \qquad \sigma_r-a.s. $$
where $ :B:^\ast$ is a redifinition of $:B:$ as defined in Section $2.4$. Moreover the measure $\sigma_r$ is invariant for the flow, namely

$$(ii) \quad \int_{V_r}  f(u_t ( \varphi)) d\sigma_r (\varphi)= \int_{V_r} f (\varphi) d\sigma_r (\varphi), $$
$\hbox{for every cylindrical}~f~\hbox{functional  and every}~t.$

\end{thm}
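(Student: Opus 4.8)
The plan is to construct the flow through a finite-dimensional Galerkin truncation, to transfer invariance from the truncated Hamiltonian systems down to the corresponding surface measures, and then to pass to the limit using the uniform-in-$n$ estimates of Sections~\ref{estim-B} and~\ref{renormalized}. The conceptual engine is that the invariance of the approximate surface measures is exactly what converts the static $L^p$ bounds on $:B:$ into dynamical a priori bounds along the flow.

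Fix $n\in\N$ and consider the truncation of \eqref{rNLS} obtained with the projector $\Pi_n$: this is the finite system governing $\{\varphi_k:1\le|k|^2\le n\}$ with the polynomial vector field $iA\varphi-i(:B:)^n(\varphi)$, written in Duhamel form as $\Phi^n_t(\varphi)=e^{itA}\varphi+\int_0^t e^{-i(t-s)A}(:B:)^n(\Phi^n_s(\varphi))\,ds$. The truncated renormalised mass $E^n(\varphi)=\sum_{1\le|k|^2\le n}(|\varphi_k|^2-Z_k)$ is conserved, so every trajectory stays on the compact sphere $V^n_r=\{E^n=r\}$; hence $\Phi^n_t$ is smooth and globally defined for $t\in\R$. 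By Liouville's theorem and conservation of $H_N$, the truncated Gibbs measure $e^{-H_N}\,\Pi\,d\varphi$ is invariant under $\Phi^n_t$, as recalled in the introduction. Since $\Phi^n_t$ also preserves the fibration by the level sets of $E^n$, disintegrating this invariant measure along $E^n$ (the finite-dimensional analogue of the disintegration in Theorem~\ref{thmredi}) forces each conditional law $\sigma^n_r$ to be invariant under $\Phi^n_t$, for every $r$ with $\rho^n(r)>0$. These $\sigma^n_r$ are the natural finite-dimensional approximations of $\sigma_r$.

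I would then pass to the limit $n\to\infty$. The key point is that, by the invariance just obtained, for each fixed $s$ the law of $\Phi^n_s(\varphi)$ under $\sigma^n_r$ is again $\sigma^n_r$; therefore the time integrals of $\|(:B:)^n(\Phi^n_s)\|_{H^\beta}^p$ are controlled, uniformly in $s$ and $n$, by $\int\|(:B:)^n\|_{H^\beta}^p\,d\sigma^n_r$, which Proposition~\ref{prop1} together with the integrability of $d\mu/d\mu_2$ bounds uniformly in $n$. Realizing all the truncated dynamics on the common probability space $(\Omega,\mathbb F,P)$ carrying the Gaussian field underlying $\mu$, this a priori bound combined with the convergence $(:B:)^n\to:B:$ in $L^{2p}_{\mu_a}(H^\beta,H^\beta)$ lets one show that $(\Phi^n)$ is Cauchy in $L^p(P;C([-T,T];H^\beta))$ for every $T>0$ and $\beta<0$; a Gronwall-type estimate absorbs the time integral even though the Lipschitz constant of $:B:$ is only $L^p$-integrable. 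The limit $u$ satisfies $(i)$ with $:B:^\ast$ the quasi-continuous redefinition of $:B:$: since $:B:\in W_\infty(H^\beta)$ by the Propositions of Section~\ref{estim-B} and $\sigma_r$ charges no set of capacity zero by Theorem~\ref{thmredi}, the integral in $(i)$ is well defined $\sigma_r$-a.s.\ and the trajectory avoids capacity-zero sets for every $t$. Invariance $(ii)$ then follows by passing to the limit in $\int_{V^n_r} f\circ\Phi^n_t\,d\sigma^n_r=\int_{V^n_r} f\,d\sigma^n_r$, using $\sigma^n_r\to\sigma_r$ and the convergence of the flows, for cylindrical $f$.

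Uniqueness is obtained by a Gronwall argument applied to the difference of two solutions: the estimate $\nabla:B:\in L^{2p}_{\mu_a}(H^\beta;H.S.(H^1,H^\beta))$ yields the Lipschitz control of $:B:^\ast$ off a capacity-zero set, which $\sigma_r$ ignores. The main obstacle is the passage to the limit in the previous paragraph: establishing the Cauchy property of the truncated flows when solutions live in the negative-regularity space $H^\beta$ and the vector field $:B:$ is only defined quasi-surely, and simultaneously guaranteeing that for $\sigma_r$-a.e.\ initial datum the limiting trajectory remains, for all $t$, outside the capacity-zero sets on which the redefinition $:B:^\ast$ is left unspecified.
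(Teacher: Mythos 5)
Your overall skeleton (truncate with $\Pi_n$, get invariance of a surface measure under the truncated flows, pass to the limit) matches the paper's, but two of your load-bearing steps do not hold up. The most serious is the limit/uniqueness mechanism: you claim $(\Phi^n)$ is Cauchy in $L^p(P;C([-T,T];H^\beta))$ via ``a Gronwall-type estimate'' that ``absorbs the time integral even though the Lipschitz constant of $:B:$ is only $L^p$-integrable,'' and later that $\nabla :B:\in L^{2p}_{\mu_a}(H^\beta;H.S.(H^1,H^\beta))$ ``yields the Lipschitz control of $:B:^\ast$ off a capacity-zero set.'' Neither assertion is justified, and both are essentially false as stated: $W_\infty$ regularity means the gradient lies in $L^p$ spaces with respect to the measure, not in $L^\infty$, and a Sobolev vector field is not Lipschitz off a null or capacity-zero set (a Lusin-type Lipschitz bound holds only on a set of large measure, which the trajectory has no reason to stay in). If a pathwise Gronwall contraction were available, the DiPerna--Lions/Ambrosio--Figalli theory would be unnecessary; it exists precisely because it is not. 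The paper accordingly does something weaker and different: it proves only \emph{tightness} of the laws $Q_n^k$ of the components $[v_t^n]^k$ under $\sigma_r$ (Chebyshev plus the Duhamel formula, with the time integrals of $\|:B:^n\|$ converted into static integrals by the invariance of $\sigma_r$ under $v^n_t$), extracts a weak limit, and invokes Skorohod's theorem to realize almost-sure convergence on a \emph{new} probability space $(\Omega,\mathbb F,P)$ — the solution is a weak (in the probabilistic sense) limit, not a strong one. Existence and uniqueness of the flow are then obtained not by Gronwall but from the continuity equation $\frac{d}{dt}k_t=\mathrm{div}_{\sigma_r}(k_t B)$ and Theorem 4.7 of \cite{AF} for Sobolev vector fields on Wiener spaces.

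The second gap is in your treatment of the measures. You disintegrate the truncated Gibbs measure along the truncated mass $E^n$, obtaining finite-dimensional conditional laws $\sigma_r^n$, and at the end you pass to the limit ``using $\sigma^n_r\to\sigma_r$.'' That convergence of conditioned measures to the Airault--Malliavin surface measure is never established, and it is not a routine fact: disintegration is not a continuous operation, and controlling it would require quantitative information on the densities $\rho^n$, $\rho$. The paper avoids this issue entirely by a structurally different step: it shows that the \emph{infinite-dimensional} surface measure $\sigma_r$ itself is invariant under every truncated flow $v^n_t$, via the co-area formula of Theorem~\ref{thmredi} and an integration by parts on the Wiener space — for cylindrical $f$ and compactly supported $h$, the integral of $h(r)\rho(r)\int_{V_r}\langle (A-:B:^n)(\varphi),\nabla f(\varphi)\rangle_{H^1}\,d\sigma_r$ over $r$ equals $-\int \langle h'(E(\varphi))(A-:B:^n)(\varphi),\nabla E(\varphi)\rangle_{H^1} f(\varphi)\,d\mu(\varphi)=0$, the vanishing coming from the tangency of the truncated field to the mass level sets. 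Working with this single fixed measure is what makes both the tightness estimates and the passage to the limit in the invariance identity (ii) go through with no convergence of measures needed. To repair your argument you would either have to prove $\sigma^n_r\to\sigma_r$ (a nontrivial lemma you do not supply) or adopt the paper's divergence-free computation; and you would have to replace both Gronwall steps by a compactness-in-law argument and by the Ambrosio--Figalli well-posedness theory, respectively.
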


 \begin{proof}

 The beginning of the proof follows the lines of  
 \cite[Theorem 4.1]{C}  for the Euler equation.

First notice that, given the estimates of all Sobolev norms of the field $:B:$ which were previously derived, one can consider a redefinition $:B:^\ast$ on each level set $V_r$. By the definition of the surface measure, $:B:^\ast$ belongs also to all Sobolev spaces with respect to this measure. 

From the regularity of the finite dimensional approximations $:B:^n$ for every $n$, 
we know that there exists a global solution $v_t^n (\varphi )$ of the equation, written here in its integral form,

\begin{equation} \label{vnphi}
v_t^n (\varphi )= e^{itA}\varphi +\int_0^t e^{-i(t-s)A} :B:^n (v_s^n (\varphi ))ds~, \; t \in \R, \; \varphi \in H^{\beta}.
\end{equation}

For every cylindrical functional $f$ and every $h\in {C^1}(\mathbb R)$ with compact support we have, writiting with the same notation the scalar product
\linebreak 
$\langle h_1 , h_2 \rangle_{H^1}$ and its extension to $h_1 \in H^\beta ,h_2 \in H^1$,

\begin{align*}
\int_{\mathbb R} h(r) \rho (r)& \int_{V_r} \langle (A- :B:^n )(\varphi ), \nabla f (\varphi )\rangle_{H^1} d\sigma_r (\varphi ) dr\\
&=\int_{H^{\beta}} \langle h((E (\varphi )) (A- :B:^n  )(\varphi ), \nabla f (\varphi )\rangle_{H^1}   d\mu (\varphi )\\
&=-\int_{H^{\beta}}  \langle h' (E (\varphi )) (A- :B:^n ), \nabla E (\varphi ) \rangle_{H^1} f (\varphi)  d\mu (\varphi )\\
&=0,
\end{align*}
where the last equality is a consequence of the invariance of the mass.

This implies the invariance of the surface measure with respect to the  flows $v_t^n(.)$ or, in other words, that the divergence (the $L_{\sigma_r}^2$-dual 
of the gradient) of $A-:B:^n$ is equal to zero.

Denote by $Q_n^k$ the law of $[v_t^n ]^k$ in the space $C(\mathbb R^+ ; \mathbb C )$  endowed with the supremum norm, namely

$$Q_n^k (\Gamma ) =\sigma_r(\{ [v^n (\varphi )]^k \in \Gamma, \quad \Gamma \subset C(\mathbb R^+ ; \mathbb C ) \} ).$$
The index $k$ stands here for  component in the $H^\beta$ basis  $\{ \frac{1}{k^\beta} e_k \} $.

On the space of probability measures over the real valued continuous path space we consider the weak topology. We show that, for $w\in C(\mathbb R^+ ; \mathbb C )$,
$$\lim_{R\rightarrow \infty} \sup_n Q_n^k \Big( |w(0)| >R \Big) =0,$$
and that for every $R>0$ and $T>0$
$$\lim_{\delta \rightarrow 0}\sup_n Q_n^k \Big( \sup_{0\leq t\leq t' \leq T , t'-t\leq \delta} |w(t) -w(t' )|\geq R \Big)=0 .$$ 
For negative times the proof is analogous.

The first statement is simply due to the estimate
\begin{align*}
Q_n^k \Big( |w(0)| >R \Big)& \leq  \frac{1}{R^2 } \int |w(0) |^2 dQ_n^k\\
&=\frac{1}{R^2 } \int |\varphi^k |^2 d\sigma_r(\varphi), 
\end{align*}
 where in the first inequality  we used Chebycheff's inequality. Concerning the second one, we have 
\begin{align*}
&Q_n^k \Big( \sup_{0\leq t\leq t' \leq T, t'-t\leq \delta}|w(t) -w(t' )|\geq R \Big)  \\
&\leq \frac{1}{R} \int_{V_r}  \sup_{0\leq t\leq t' \leq T, t'-t\leq \delta}| | [[v_t^n (\varphi )-v_{t'}^n (\varphi ) ]^k | d\sigma_r(\varphi)\\
&\leq \frac{1}{R}  \sup \Big(\int_{V_r}  \Big( |[e^{it' A} \varphi -e^{itA}\varphi ]^k | 
+ \Big|\Big[ \int_0^t |( e^{-i (t -s)A} ) :B:^n (v_s^n (\varphi ))]  ds \\ 
&\qquad \qquad \qquad \qquad -\int_0^{t'} ( e^{-i (t' -s)A} ) :B:^n (v_s^n (\varphi ))\Big]^k \Big| \Big) d\sigma_r(\varphi) \Big)
\end{align*}
Using equalities
$\frac{d}{dt} e^{itA}\varphi = itA e^{itA}\varphi$ and 
\begin{align*}
\frac{d}{dt}&  \int_0^t ( e^{-i (t -s)A} ) :B:^n (v_s^n (\varphi ))  ds \\
&=-iAe^{-itA} \int_0^t e^{isA} :B:^n (v_s^n (\varphi ))  ds +{ :B:^n (v_t^n (\varphi )} ),
\end{align*} 
we derive
\begin{align*}
Q_n^k &\Big( \sup_{0\leq t\leq t' \leq T, t'-t\leq \delta}|w(t) -w(t' )|\geq R \Big) \\
&\leq \frac{\delta^2}{R^2} c(k,T) \Big( \int_0^T{ \int_{V_r} } [ |e^{isA}\varphi |^k ]^2 dsd\sigma_r (\varphi )\Big)^{\frac{1}{2}}\\
&\quad +  \frac{\delta}{R} c(k,T)  \int_0^T\int_0^T { \int_{V_r} }  |e^{-isA}e^{itA}\varphi |^k |[:B:^n (\varphi )]^k |  (\varphi)ds dt d\sigma_r (\varphi )\\
& \quad +  \frac{\delta}{R} c(k,T) \int_0^T { \int_{V_r} } \int |[:B^n : (\varphi )]^k | (\varphi) ds d\sigma_r (\varphi )\\
& \leq \frac{\delta c_r}{R^2} \int_{V_r}  \| \varphi \|_{H^{\beta}} d\sigma_r (\varphi ) +  \frac{\delta c_r}{R}  \int_0^T { \int_{V_r} } |[:B :^n (  \varphi )]^k |(\varphi )ds d\sigma_r(\varphi)\\
& \leq \frac{\delta c_r}{R^2} \int_{V_r} \Big( \|\varphi\| _{H^{\beta}} + \| :B:^n \|_{H^{\beta}} (\varphi )\Big)d\sigma_r (\varphi ), 
\end{align*}
where we have denoted  $c_r$ a generic constant depending on $r, k$ and $T$,
 and the last step is a consequence of the quasi-invariance of the process with respect to the surface measure and the uniform (in time) estimates of the $L^2$ norms of their laws.
\color{black}

The limit follows from the fact that, since the sequence  $\{:B:^n\}_n$,  as well as that of  their gradients converges to $:B:$ in all $L^p _\mu$ spaces, it  also converges (to $:B:^\ast$) in $L^p_{\sigma_r}$.
Hence
$$Q_n^k \Big( \sup_{0\leq t\leq t' \leq T, t'-t\leq \delta}|w(t) -w(t' )|\geq R \Big) \leq  \frac{\delta}{R^2} Tc$$
for some constant c, independent of $n$.
We conclude that the sequence of  probability laws $Q_n^k$ is tight on  {$C({\mathbb R}^+;\mathbb{C})$,  }
and therefore that we can extract a subsequence converging weakly to a probability measure $Q$.
 By Skorohod's theorem there exists a probability space $(\Omega , \mathbb F, P)$ and processes $u_t^n (\omega )=\sum_k [u_t^n (\omega )]^k e_k , u_t (\omega )$ 
 with laws $Q_n^k$ and $Q^k$ resp., such that $u_{\cdot}^n \rightarrow u_{\cdot} $ $P$-almost surely.

Concerning (ii), for every cylindrical functional $f$, we have

\begin{align*}
 \int_{\Omega}  f(u_t ( \omega )) dP(\omega )&=\lim_n \int_\Omega  f(u_t^n (\omega ))dP(\omega )\\
 &= \lim_n \int_{V^r}  f(v_t^n (\varphi ))d\sigma_r (\varphi )\\
&=\lim_n \int_{V^r}  f(\varphi) d\sigma_r (\varphi) = \int_{V^r} f (\varphi)  d\sigma_r(\varphi).
\end{align*}
For the last equality we have used  the invariance of the measure $\sigma_r$ for the flows $v_t^n$.

\color{black}
Recalling that $\rho(r)= \dfrac{d(E\ast \mu)}{dr} >0$, we have
\begin{align*}
 \int_{\R^+} \psi (r) \rho (r) \int_{V_r} f(v_t^n (\varphi )) d\sigma_r(\varphi) dr&= \int_{V_r} \psi (E(\varphi ))f(v_t^n (\varphi ))d\mu (\varphi )\\
&= \int_{V_r}   \psi (E (v_{-t}^n (\varphi ))f(\varphi )d\mu (\varphi )\\
&= \int_{V_r}   \psi (E (\varphi ))f(\varphi )d\mu (\varphi )\\
&= \int_{\R^+}   \psi (r) \rho (r) \int_{V_r} f(\varphi) d\sigma_r(\varphi) dr
\end{align*}
for every smooth function $\psi$. 
 In particular the process $u_t$ takes values in $V_r$.

We show $(i)$. For every $k$, we get for $\beta <0$ 

\begin{align*}
\int_\Omega  \int_0^t & \Big| \Big[e^{-i(t-s)A} :B:^n (u_t^n (\omega ))- \int_0^t e^{-i(t-s)A} :B:^\ast  (u_t (\omega ))\Big]^k \Big| ds dP (\omega)\\
&\leq  \int_\Omega  \int_0^T \Big| \Big[e^{-i(t-s)A} \Big( :B:^n (u_t^n( \omega) )- :B:^\ast (u_t^n(\omega) )\Big)\Big]^k \Big| ds dP(\omega) \\
&+\int _\Omega\int_0^T \Big| \Big[e^{-i(t-s)A} \Big( :B:^\ast (u_t^n(\omega) )- :B:^\ast (u_t(\omega) )\Big)\Big]^k \Big| ds dP(\omega) \\
&\leq c(k) \Big( \int  \int_0^T \Big\| \Big( :B:^n (u_t^n(\omega) )- :B:^\ast (u_t^n(\omega) )\Big) \Big\|_{H^\beta} ds dP(\omega) \\
&+\int_\Omega \int_0^T  \Big\| \Big( :B:^\ast (u_t^n(\omega) )- :B:^\ast (u_t(\omega) )\Big)  \Big\|_{H^\beta} ds dP(\omega) \Big).
\end{align*}
The first term converges to zero due to the invariance of the measure $P$ and the ($L^2$) convergence of $:B^n :$ to $:B:^\ast$. Concerning the second term,
let $\Lambda\subset H_\beta$ be a set of $\sigma_r$ measure equal to zero such that $:B:^\ast$ is continuous in $\Lambda^c $. Define 

$$\Lambda_n =\{(t,\omega )\in [0,T]\times \Omega : u_t^n (t,\omega ) \in  \Lambda \}, $$
and
$$\Lambda_\infty =\{(t,\omega )\in [0,T]\times \Omega : u_t (t,\omega ) \in \Lambda \}.$$
We have,

$$\int_0^T \int_\Omega \chi_\Lambda (u_t^n (\omega ))  dP(\omega ) dt =\int_0^T \int_\Omega \chi_\Lambda (\varphi ) d\sigma_r (\varphi )dt =0, $$
i.e, the measure of $\Lambda_n$ (and, analogously, of $\Lambda_\infty$), considering the Lebesgue measure in the time coordinate, is equal to zero.

On the other hand, since $u_{\cdot}^n$ converges to $u_{\cdot}$ almost surely,  consider a set $D\subset \Omega$ such that $P(D^c )=0$ where this convergence holds, 
in the space $H^{\beta}$ and for every $t\in [0,T]$. Then the measure of $[0,T]\times D^c$ is zero.

Using Egorov's theorem, we conclude that, for $(t,\omega)$ in $\Lambda^c_\infty \bigcap_n \Lambda_n^c \bigcap ([0,t] \times D^c )$, and $\beta <0$ 
$$\int_\Omega \int_0^T  \Big\| \Big( :B:^\ast (u_t^n (\omega))- :B:^\ast (u_t(\omega) )\Big) \Big\|_{H^\beta} ds dP(\omega) \rightarrow 0, $$
which concludes the  proof of existence of a solution in a weak sense. Namely, there exists a stochastic process $u_t$ such that, for every $t$, 
$$u_t (\omega )=e^{itA} u_0 (\omega ) +\int_0^t e^{-i (t-s)A} :B:^\ast (u_s (\omega ))ds \quad P - a.s$$
and verifying

$$\int_{\Omega} f(u_t (\omega )) dP (\omega )= \int_{V_r} f(\varphi)  d\sigma^r(\varphi)$$
for every cylindrical functional $f$ and every $t$.

It remains to show the existence of a flow (defined $\sigma_r$- almost everywhere) and its uniqueness. For this we follow the arguments in \cite{AF}.

Consider a functional $k_0 \in W_{\infty}$ and bounded, whose redefinition on $V_r$ we still denote by $k_0$. Then, by approximating $k_0$ by cylindrical functionals,
we have

\begin{align*}
 \lim_n \int_{V_r}  f(v_t^n (\varphi ) &)k_0 (\varphi )d\sigma_r (\varphi )=\lim_n { \int_{V_r}  }  f(v_t^n (\varphi )) k_0 (v_{-t}^n(\varphi ) \circ v_t^n(\varphi) d\sigma_r (\varphi )\\
&:=\lim_n \int_{V_r} f(\varphi)  k_t^n (\varphi) d\sigma_r(\varphi) = \int_{V_r} f (\varphi)  k_t(\varphi)   d\sigma_r (\varphi),
\end{align*}
where $k_t$ is the weak $L^2$ limit of the sequence $k_t^n$. The functional $k_t $ solves the continuity equation $\frac{d}{dt} k_t =div_{\sigma_r} (k_t B) $, 
denoting by $ div_{\sigma_r} $ the $L^2_{\sigma_r}$ dual of the gradient (c.f \cite{AF}, Prop. 4.8.).
Existence and uniqueness of the flow follows as in the proof of \cite{AF} Theorem 4.7.

\end{proof}

\section{Appendix}

In this appendix, we show a convergence result for  series that were used quite often in section \ref{estim-B}.

\begin{lem}
\label{convseries} Let $d=1,2$. 
For any $\beta<0$, it holds that
\begin{equation} \label{convseries1}
\sum_{0\neq l,m\in \Z^2} \dfrac{1}{|l|^2}\dfrac{1}{|m|^2} \sum_{k\not\in \{0, l+m\}} \dfrac{|k|^{\beta}}{|l+m-k|^2}<\infty, 
\end{equation}
and
\begin{equation}  \label{convseries2}
\sum_{k\neq 0, m\not\in \{k, k/2\} } \dfrac{|k|^{\beta}}{|k-m|^4 |k-2m|^2}<\infty.
\end{equation}
\end{lem}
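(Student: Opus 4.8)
The plan is to reduce both estimates to one elementary lemma on discrete convolutions on $\Z^2$: for exponents $a,b>0$ with $a+b>2$ one has
\[
\sum_{k\in\Z^2\setminus\{0,n\}}\frac{1}{|k|^{a}\,|n-k|^{b}}\lesssim |n|^{2-a-b}\,\log(2+|n|),\qquad n\neq 0,
\]
the logarithm being present only when $a$ or $b$ equals the critical value $2$; when neither exponent is critical the clean bound $|n|^{2-a-b}$ holds. I would prove this by the standard three-region decomposition into $\{|k|\le |n|/2\}$ (where $|n-k|\asymp|n|$), $\{|n-k|\le|n|/2\}$ (where $|k|\asymp|n|$), and $\{|k|\ge 2|n|\}$ (where $|k|\asymp|n-k|$), comparing each resulting lattice sum to a radial integral. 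The only delicate point is the critical case $b=2$ occurring in \eqref{convseries1}, which produces the logarithmic factor; this is harmless because it can be absorbed by replacing $\beta$ with any $\beta'\in(\beta,0)$.

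For \eqref{convseries1} I would set $n=l+m$ and treat the innermost sum as $S(n):=\sum_{k\notin\{0,n\}}|k|^{\beta}\,|n-k|^{-2}$. Writing $|k|^{\beta}=|k|^{-a}$ with $a=-\beta>0$ and $b=2$, the lemma gives $S(n)\lesssim |n|^{\beta}\log(2+|n|)\lesssim |n|^{\beta'}$ for any fixed $\beta'\in(\beta,0)$ (the case $n=0$ being a convergent constant, since $\sum_{k\neq 0}|k|^{\beta-2}<\infty$). It then remains to bound $\sum_{l,m\neq 0}|l|^{-2}|m|^{-2}|l+m|^{\beta'}$; summing first in $l$ with $n=l+m$ fixed is again a convolution (now with $a=b=2$), yielding $\sum_{l}|l|^{-2}|n-l|^{-2}\lesssim|n|^{-2}\log(2+|n|)$, so the whole expression is controlled by $\sum_{n\neq0}|n|^{\beta'-2}\log(2+|n|)$, which converges because $\beta'<0$.

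For \eqref{convseries2} I would fix $k$ and perform the change of variables $j=k-m$, so that $k-2m=2j-k$ and the inner sum becomes $T(k)=\sum_{j\neq 0,\,2j\neq k}|j|^{-4}|2j-k|^{-2}$. Since $|j|^{-4}$ is summable on $\Z^2$, this sum is governed by the region $|j|\le|k|/4$, where $|2j-k|\asymp|k|$: that region contributes $\lesssim|k|^{-2}\sum_{j\neq0}|j|^{-4}\lesssim|k|^{-2}$, while the contribution near the pole $2j\approx k$ (where $|j|\asymp|k|$ and $2j-k$ runs over a shifted sublattice of spacing $2$) is of smaller order $|k|^{-4}\log(2+|k|)$. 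Hence $T(k)\lesssim|k|^{-2}$, and the outer sum is dominated by $\sum_{k\neq0}|k|^{\beta-2}$, which converges for $\beta<0$. The one-dimensional analogues of both series are strictly easier and handled the same way.

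The main obstacle, as flagged above, is purely the borderline nature of the exponent $2$ in dimension $d=2$: several of the convolutions sit exactly at the critical threshold and thereby generate logarithmic corrections. The key observation that makes everything go through is that each such logarithm is always multiplied by a strictly negative power ($|n|^{\beta}$, respectively $|k|^{\beta-2}$), so it is absorbed by an arbitrarily small loss in the exponent and never obstructs convergence.
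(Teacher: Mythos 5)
Your proof is correct in substance and takes essentially the same route as the paper: both split the inner sum into regions according to the relative sizes of $|k|$ and $|l+m|$, absorb the logarithm produced by the critical exponent $2$ into an arbitrarily small loss in the negative power, and then handle the outer double sum as one further convolution of the same type (your direct bound $T(k)\lesssim |k|^{-2}$ for \eqref{convseries2} is exactly the ``similar decomposition'' that the paper leaves to the reader).

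One small repair is needed: the bound $|n|^{2-a-b}\log(2+|n|)$ in your convolution lemma is false when $\max(a,b)>2$, since the terms with $|k|$ bounded already contribute $\gtrsim |n|^{-b}$; the correct exponent is $-\min(a,b,a+b-2)$. In your application $a=-\beta$, so for $\beta<-2$ one has $S(n)\asymp |n|^{-2}$, and your claim that $S(n)\lesssim |n|^{\beta'}$ for \emph{every} $\beta'\in(\beta,0)$ fails. This costs nothing: because $|k|\geq 1$ on the lattice, both series are monotone in $\beta$, so one may assume $\beta\in(-1,0)$ at the outset, after which your lemma and both applications are correct as written (the paper's own region-by-region bounds implicitly require the same restriction).
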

\begin{proof}
It is immediate to see that the above series converge in dimension $1$. Indeed, for $l$ and $m$ fixed, we have that $ \sum_{k\not\in \{0, l+m\}} \dfrac{|k|^{\beta}}{|l+m-k|^2}\leq C$, for some constant $C$ not depending on $l$ and $m$. Then the result follows from the fact that $\sum_{0\neq l\in \Z^2} \dfrac{1}{|l|^2}$ is bounded.

Let $d=2$. We first prove \eqref{convseries1}. Using a change of variable, the convergence of the serie is equivalent to the one of
$$\sum_{0\neq l,m \in \Z^2} \dfrac{1}{|l|^2}\dfrac{1}{|m|^2} \sum_{k \not\in \{0, -(l+m)\} }\dfrac{|k+l+m|^{\beta}}{|k|^2}.$$
Let $\lambda \in (0,1).$ We are going to split the sum in $k$ into three parts. 
\begin{itemize}
\item First, we take $|k|\leq (1-\lambda) |l+m|$; this yields 
$$\sum_{0 \neq k \in \Z^2,\ |k|\leq (1-\lambda) |l+m|} \dfrac{|k+l+m|^{\beta}}{|k|^2} \leq \lambda^{\beta} |l+m|^{\beta} \sum_{1\leq n \leq (1-\lambda)|l+m| } \dfrac{2}{n} \leq C_\lambda |l+m|^{\beta /2}. $$
\item Then, we consider $|k|\geq (1+\lambda)|l+m|$. In this case, we get 
\begin{align*}
\sum_{0 \neq k \in \Z^2,\ |k|\geq (1+\lambda) |l+m|} \dfrac{|k+l+m|^{\beta}}{|k|^2}& \leq C_\lambda \sum_{0 \neq k \in \Z^2,\ |k|\geq (1+\lambda) |l+m|}   |k|^{\beta - 2} \\  
& \leq C_\lambda   |l+m|^{\beta }. 
\end{align*}
\item Finally, when $(1-\lambda )|l+m| \leq |k|\leq (1+\lambda)|l+m|$, i.e.,  $|k|\approx |l+m|$, we have
\begin{align*}
\sum_{0 \neq k \in \Z^2,\ |k|\approx |l+m|} \dfrac{|k+l+m|^{\beta}}{|k|^2}& \leq C_\lambda |l+m|^{\beta /2} \sum_{0 \neq k \in \Z^2,\ |k|\approx |l+m|} |k|^{\beta/2 -2} \\
& \leq C_\lambda |l+m|^{\beta /2}. 
\end{align*}
\end{itemize}
 Therefore, we have to upper  estimate
$$\sum_{0\neq l,m \in \Z^2, l+m\neq 0} \dfrac{1}{|l|^2}\dfrac{|l+m|^{\beta/2}}{|m|^2}.$$
A similar argument implies  that
$$\sum_{0\neq l,m\in \Z^2, l+m\neq 0} \dfrac{1}{|l|^2}\dfrac{|l+m|^{\beta/2}}{|m|^2} \leq  C \, \sum_{0\neq l \in \Z^2} \dfrac{|l|^{\beta/4}}{|l|^2} <\infty, $$
which completes the proof of \eqref{convseries1}.

A  change of variables implies that
$$\sum_{k\not\in \{  0, m, 2m\} } \dfrac{|k|^{\beta}}{|k-m|^4 |k-2m|^2} = \sum_{k\not\in \{ 0, -m,m\}  } \frac{|k+m|^\beta}{|k|^4 |k-m|^2}.$$
A decomposition similar to the above one  implies \eqref{convseries2}.  
\end{proof}

\end{document}